\newtheorem*{bthm}{Theorem}
\newtheorem{mthm}{Theorem}
\renewcommand\themthm{\Alph{mthm}}
\newtheorem{thm}{Theorem}[section]
\newtheorem{prop}[thm]{Proposition}
\newtheorem{lem}[thm]{Lemma}
\newtheorem{cor}[thm]{Corollary}
\theoremstyle{definition}
\newtheorem{example}[thm]{Example}
\theoremstyle{remark}
\newtheorem{remark}[thm]{Remark}
\numberwithin{equation}{section}
\newcommand{\dietrich}[1]{}
\newcommand{\CC}{\mathbbm{C}}
\newcommand{\ZZ}{\mathbbm{Z}}
\newcommand{\Sym}{\mathrm{Sym}}
\newcommand{\group}{\mathrm}
\newcommand{\GL}{\group{GL}}
\newcommand{\SL}{\group{SL}}
\newcommand{\Sp}{\group{Sp}}
\newcommand{\SO}{\group{SO}}
\newcommand{\Spin}{\group{Spin}}
\newcommand{\Ch}{\mathsf{X}}
\newcommand{\mat}{\group{Mat}}
\renewcommand{\d}{\mathrm{d}}
\DeclareMathOperator{\grad}{\mathrm{grad}}
\newcommand{\frg}{\mathfrak{g}}
\newcommand{\fra}{\mathfrak{a}}
\newcommand{\frs}{\mathfrak{s}}
\newcommand{\ssl}{\mathfrak{sl}}
\newcommand{\ggl}{\mathfrak{gl}}
\newcommand{\Lie}{\mathfrak{Lie}}
\renewcommand{\rho}{\varrho}
\renewcommand{\phi}{\varphi}
\begin{document}

%
%

\title[\'Etale representations for
reductive algebraic groups]{\'Etale representations for
reductive algebraic groups with one-dimensional center}

\author[Burde]{Dietrich Burde}

\address{Dietrich Burde,
Faculty of Mathematics\\
University of Vienna\\
Oskar-Morgenstern-Platz 1\\
1090 Vienna\\
Austria}
\email{dietrich.burde@univie.ac.at}

\author[Globke]{Wolfgang Globke}

\address{Wolfgang Globke,
School of Mathematical Sciences\\
The University of Adelaide\\
SA 5005\\
Australia}
\email{wolfgang.globke@adelaide.edu.au}

\subjclass[2010]{Primary 32M10; Secondary 20G05, 20G20}

\begin{abstract}
A complex vector space $V$ is a prehomogeneous $G$-module if
$G$ acts rationally on $V$ with a Zariski-open orbit. 
The module is called \'etale if $\dim V=\dim G$.
We study \'etale modules for reductive algebraic groups $G$
with one-dimensional center.
For such $G$, even though every \'etale module is a
regular prehomogeneous module, its irreducible submodules have
to be non-regular.
For these non-regular prehomogeneous modules, we determine some
strong constraints on the ranks of their simple factors.
This allows us to show that there do not exist \'etale modules for
$G=\GL_1\times S\times\cdots\times S$, with $S$ simple.
\end{abstract}

\setcounter{tocdepth}{1}

\maketitle

%
%

\tableofcontents


\section{Introduction}\label{sec_intro}
Affine \'etale representations of Lie groups arise in many contexts. 
For a given connected Lie group $G$, the existence of such a representation is equivalent 
to the existence of a left-invariant affine structure on $G$
(see \cite{baues,burde1}).
In $1977$ Milnor \cite{MIL} discussed
the importance of such structures for the study of fundamental groups of complete affine manifolds,
and for the study of affine crystallographic groups, which initiated generalizations
of the Bieberbach theorems for Euclidean crystallographic groups to affine crystallographic groups, 
see \cite{GRS}.
Milnor asked the existence question for left-invariant affine
structures on a given Lie group $G$, and 
suggested that all solvable Lie groups $G$ admit such a structure.
This question received a lot of attention, and was eventually
answered negatively by Benoist \cite{benoist}.
For a survey on the results and the history see
\cite{BU24,BU51,GRS}.

Affine \'etale representations of $G$ and left-invariant affine structures on $G$ both define a bilinear 
product on the Lie algebra $\frg$ of $G$ that gives $\frg$ the structure of a
\emph{left-symmetric algebra} (\emph{LSA-structure} for short),
and conversely an LSA-structure determines an affine structure
on $G$ (see Paragraph \ref{subsec:etaleLSA} below).
The existence question then can be formulated
on the Lie algebra level, and has been studied for several classes of Lie algebras, e.g., for semisimple, reductive, 
nilpotent and solvable Lie algebras, see \cite{BU24}. LSA-structures on Lie algebras also correspond to non-degenerate
involutive set-theoretical solutions of the Yang-Baxter equation, and to certain left brace structures, 
see \cite{ESS, BAC}.
A natural generalization of LSA-structures is given by post-Lie algebra structures 
on pairs of Lie algebras \cite{BU51}.

\'Etale representations also appear in the classification of
adjoint orbits on graded semisimple Lie algebras
$\frg=\bigoplus_{k\in\ZZ}\frg_k$.
The classification of $G_0$-orbits of nilpotent elements
can be reduced to determining certain graded semisimple
subalgebras $\frs$ associated to such elements which contain an
\'etale representation for the grade-preserving subalgebra
$\frs_0$ on the module $\frs_1$, see \cite{vinberg}.

It follows from the Whitehead Lemma in Lie algebra cohomology that a semi\-simple Lie algebra over a field $K$
of characteristic zero does not admit an LSA-structure.
The reductive Lie algebra $\ggl_n(K)$, however, admits a canonical LSA-structure.
Indeed, it is natural to consider the \emph{reductive} case, where
we have the powerful tools of invariant theory and representation theory for reductive groups at hand.
Furthermore, we can use the theory of prehomogeneous modules for reductive groups as developed by 
Sato and Kimura.
Still, it turns out that the existence question is already very difficult in the reductive
case, and is still open in general.

On the other hand there are several results for reductive Lie algebras -- respectively reductive groups -- with 
one-dimensional center.
Like the present work, these results make use of the invariant
theoretic methods to study rational modules for semisimple groups
with orbits of codimension $1$ introduced by Baues
\cite[Section 3]{baues}.
The first author showed in \cite[Theorem 2]{burde2} that a reductive Lie algebra 
$\frg=\fra\oplus\frs$ with $\frs$ split simple and $\dim \mathrm{Z}(\frg)=1$ admits an LSA-structure if and only if 
$\frs= \ssl_n(K)$.
Baues \cite[Section 5]{baues} classified all LSA-structures on $\ggl_n(K)$.

It is the aim of this article to make further progress for the reductive case with one-dimensional center.

\subsection{Reductive prehomogeneous modules}
A \emph{pre\-ho\-mo\-ge\-neous module} $(G,$ $\rho,V)$ consists of a linear algebraic group $G$ 
and a rational representation $\rho : G\to\GL(V)$ on a finite-dimensional complex vector
space $V$, such that $G$ has a Zariski-open orbit in $V$. The vector space $V$ is called a
\emph{prehomogeneous vector space}. We always assume that the representation $\rho$ is faithful
up to a finite subgroup. From now on $G$ is assumed to be reductive.
Recall that in this case, the Lie algebra $\frg$ of $G$ is a direct sum $\frg=\fra\oplus\frs$, where $\fra$ 
is the center of $\frg$, and $\frs$ is semisimple. We will call a prehomogeneous module $(G,\rho,V)$ for a reductive
group $G$ a \emph{reductive pre\-ho\-mo\-ge\-neous module}. A reductive group $G$ is called \emph{$k$-simple} 
if its semisimple factor has $k$ simple factors.

There are several classification results on reductive prehomogeneous modules by a group of Japanese mathematicians
around Mikio Sato and Tatsuo Kimura from the 1970s up to the present. However, a complete classification of 
prehomogeneous modules is not available.

The first classification result on prehomogeneous modules is due to Sato and Kimura \cite{SK}. They classified
\emph{irreducible} and \emph{reduced} prehomogeneous modules for reductive algebraic groups (the terminology 
will be explained in Section \ref{sec_PV}).
In addition, they determined the stabilizer subgroups of the
open orbits and the relative invariants for all cases.
We will label each module in this class by SK $n$, where $n$ is its number in \cite[\S 7]{SK}.
This classification can also be found in Kimura's book \cite{kimura}.

Kimura \cite[\S 3]{kimuraS} classified prehomogeneous modules of one-simple reductive groups,
\[
(\GL_1^k\times S,\rho_1\oplus\ldots\oplus\rho_k,V_1\oplus\ldots\oplus V_k)
\]
where $S$ is a simple group. We will label them Ks $n$, where $n$ is the number of the module in \cite[\S 3]{kimuraS}.
In each case, the generic isotropy subgroup is determined. This classification included non-irreducible modules.

Furthermore, Kimura et al.~\cite[\S 3]{kimuraI}, \cite[\S 5]{kimuraII} studied the prehomogeneity of modules 
for two-simple groups,
\[
(\GL_1^k\times S_1\times S_2,
\rho_1\oplus\ldots\oplus\rho_k,V_1\oplus\ldots\oplus V_k),
\]
where $S_1$ and $S_2$ are simple groups, under the assumption that one independent scalar multiplication
acts on each irreducible component. This assumption is a non-trivial simplification of the problem,
especially for the modules studied in \cite{kimuraII}, as it is far from obvious if one of these modules 
could be prehomogeneous with less than $k$ factors $\GL_1$ acting on the module. They studied two types 
of two-simple modules, I and II, and we will label them KI $n$ and KII $n$, where $n$ is their number
in \cite{kimuraI} or \cite{kimuraII}, respectively.

\subsection{\'Etale representations and LSA-structures}\label{subsec:etaleLSA}
It is clear that $\dim G\geq \dim V$ holds for any prehomogeneous module. If equality holds, $\dim G=\dim V$, 
we say that the representation $\rho$ (the module $V$) is an \emph{\'etale
representation} (an \emph{\'etale module}). More generally, one considers \emph{affine} \'etale
representations for arbitrary algebraic groups.
For reductive algebraic groups they can always be assumed to be
linear.

The existence of \'etale representations for a reductive algebraic group $G$ implies the existence of
LSA-structures on the reductive Lie algebra $\frg$ of $G$. More precisely, if $\rho'=(\d\rho)_1$ denotes the 
induced representation of $\rho':\frg\to\ggl(V)$ (also called an \'etale representation)
and $v\in V$ is a point in the open orbit of $\rho(G)$, then
\[
x\cdot y = \mathrm{ev}_v^{-1}(\rho'(x)\mathrm{ev}_v(y)),
\quad x,y\in\frg
\]
defines an LSA-structure on $\frg$. Here, $\mathrm{ev}_v:\frg\to V$ denotes the evaluation map
$x\mapsto\rho'(x)v$ at the point $v$. It is invertible since $\dim\frg=\dim V$ for an \'etale representation.
In addition the LSA-structure determines a left-invariant flat torsion-free affine connection $\nabla$ on $G$, 
by setting
\[
\nabla_x y = x\cdot y.
\]

\subsection{Overview and results}

The aim is, as said, to make progress on the structure of \'etale modules for reductive algebraic groups with
one-dimensional center. We briefly recall the theory of prehomogeneous modules as developed
by Sato and Kimura \cite{SK} in Section \ref{sec_PV}. We study some combinatorial aspects of castling 
transforms of irreducible reductive prehomogeneous modules in Section \ref{sec:castling}.
We find a rather strong constraint on which groups can appear as castling transforms:

\begin{mthm}\label{mthm:castling}
Let $(G,\rho,V)$ be an irreducible
prehomogeneous module for a reductive algebraic group.
Then:
\[
(G,\rho,V)
=
(L\times\SL_{m_1}\times\cdots\times\SL_{m_k},\
\sigma\otimes\omega_1\otimes\cdots\otimes\omega_1,\
V^n\otimes\CC^{m_1}\otimes\cdots\otimes\CC^{m_k}),
\]
where $L$ is a reductive algebraic group with one simple factor,
$\sigma$ is irreducible,
and $n,m_1,\ldots,m_k\geq 1$ such that
\begin{enumerate}
\item
$\gcd(m_i,m_j)=1$ for $1\leq i<j\leq k$.
\item
$\gcd(n,m_i)=1$ for all but at most one
index $i_0\in\{1,\ldots,k\}$.
\end{enumerate}
Moreover, if $(G,\rho,V)$ is castling-equivalent to a
one-simple irreducible module, then part (2) holds for all $i\in\{1,\ldots,k\}$.
\end{mthm}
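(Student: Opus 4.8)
The plan is to take the decomposition
\[
(G,\rho,V)=(L\times\SL_{m_1}\times\cdots\times\SL_{m_k},\ \sigma\otimes\omega_1\otimes\cdots\otimes\omega_1,\ V^n\otimes\CC^{m_1}\otimes\cdots\otimes\CC^{m_k})
\]
with properties (1) and (2) supplied by the main part of Theorem~\ref{mthm:castling}, and to prove the contrapositive of the final assertion: if property (2) fails, say $\gcd(n,m_k)=d>1$ for the unique exceptional index while $\gcd(n,m_i)=1$ for $i<k$, then $(G,\rho,V)$ is not castling-equivalent to a one-simple module. By the main part of Theorem~\ref{mthm:castling}, \emph{every} module castling-equivalent to $(G,\rho,V)$ has the shape above. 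A castling transformation therefore always consists in picking one simple factor acting through its standard representation $\omega_1$ --- one of the $\SL_{m_i}$, or the simple factor $S$ of $L$ when $\sigma$ restricts to a standard representation of $S=\SL_n$ --- and replacing that $\SL_p$ by $\SL_{q-p}$, where $q$ is the dimension of the module of the complementary group. In particular a simple factor is lost only when it is turned into $\SL_1$, since in a reductive group two simple factors never amalgamate.

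The key step is a castling invariant. Attach to each $\SL$-factor $\SL_p$ occurring in such a module (including $S$ in the case above) its \emph{partner gcd} $\gcd(p,q)$, with $q$ the complementary dimension. I claim: (a) property (1) is preserved by every castling transformation --- castling $\SL_{m_j}$ replaces $m_j$ by $m_j'=q_j-m_j$, and $q_j$ is divisible by every $m_l$ with $l\neq j$, so $m_j'\equiv-m_j$ and hence $\gcd(m_j',m_l)=\gcd(m_j,m_l)=1$; (b) along the lineage of any fixed factor the partner gcd is constant, because castling that factor sends $(p,q)\mapsto(q-p,q)$ with $\gcd(q-p,q)=\gcd(p,q)$, while castling some other factor fixes $p$ and replaces $q$ by a $q'$ with $q'\equiv-q\pmod p$ (a short computation from the explicit formulas for $q$ and $q'$), so again $\gcd(p,q')=\gcd(p,q)$; (c) a factor can be turned into $\SL_1$ only if its partner gcd equals $1$, since $q-p=1$ forces $\gcd(p,q)=1$.

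Granting (a)--(c) the conclusion follows at once. In the given decomposition the partner gcd of $\SL_{m_i}$ is $\gcd(m_i,\,n\prod_{j\neq i}m_j)=\gcd(m_i,n)$ by property (1), hence $1$ for $i<k$ and $d>1$ for $i=k$; and when $\sigma$ restricts to a standard representation of $S=\SL_n$, the partner gcd of $S$ is $\gcd(n,\prod_i m_i)=\prod_i\gcd(n,m_i)=d>1$. By (b) these numbers are unchanged throughout the castling class, and by (c) a factor whose partner gcd exceeds $1$ can never be removed. Thus every module castling-equivalent to $(G,\rho,V)$ still contains the simple factor descending from $\SL_{m_k}$ together with a second simple factor, namely $S$ --- which is either not castlable at all or castlable but un-removable (partner gcd $d>1$). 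Such a module has at least two simple factors, so it cannot be one-simple, a contradiction. Therefore property (2) holds for all $i$.

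I expect the main obstacle to be step (b), together with the variant of (a) that accompanies castling of $S$: one must verify carefully that the conjunction ``property (1) holds and the partner gcd of each factor equals the expected gcd with $n$'' is genuinely stable under both kinds of castling move, while keeping track of which factor plays the role of the simple part of $L$ after a move that touches $S$. This is a finite but somewhat delicate piece of number-theoretic bookkeeping; once the invariant is in place, the rest of the argument is formal.
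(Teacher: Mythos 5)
There is a genuine gap: your proposal proves only the final ``Moreover'' clause and takes the main assertion --- the existence of the decomposition $(L\times\SL_{m_1}\times\cdots\times\SL_{m_k},\ \sigma\otimes\omega_1\otimes\cdots\otimes\omega_1,\ \ldots)$ together with properties (1) and (2) --- as given, citing ``the main part of Theorem~\ref{mthm:castling}''. That main part is the substantive content of the theorem and is exactly what cannot be assumed. The paper proves it by first invoking the Sato--Kimura classification: every irreducible reductive prehomogeneous module is castling-equivalent to a reduced module in \cite[\S 7]{SK}, and inspection of that list shows each reduced module has at most one simple factor not acting through $\omega_1$ on a tensor slot (this structural fact is a classification input, not something derivable from castling combinatorics alone). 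It then runs an induction on the number of castling transforms (Lemmas \ref{lem:gcd} and \ref{lem:gcd_SLn}) to propagate the gcd conditions, with a separate lemma needed for the degenerate cases $(L,\sigma)=(\SL_n,\omega_1)$ and SK~III-2, where the roles of $L$ and the $\SL_{m_i}$ blur and one exceptional \emph{pair} must be allowed. Your claim (a) reproduces one step of that induction, but without the classification base case and without the case split of Lemma \ref{lem:gcd_SLn}, the argument is circular.

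For the ``Moreover'' clause itself, your castling-invariant ``partner gcd'' is essentially the mechanism the paper uses: part (3) of Lemma \ref{lem:gcd} records that $\gcd(n,m_{i_0})=\gcd(n,m)$ is preserved along the castling class, so a one-simple reduced form (where $m=1$) forces $\gcd(n,m_{i_0})=1$. Your congruence computations $q'\equiv -q \pmod p$ check out under property (1), and the conclusion that a factor with partner gcd exceeding $1$ can never become $\SL_1$ is correct; so this portion would be salvageable as an alternative phrasing of the paper's part (3) --- but only once the main decomposition and conditions (1)--(2) have actually been established.
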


Some general properties of \'etale representations (not just for
reductive groups) are reviewed in Section \ref{sec:general}.
We show that every reductive \'etale module is regular,
and that unipotent and semisimple algebraic groups do not admit
(linear) \'etale representations.
In Section \ref{sec_special_examples} we identify the \'etale
modules among certain classifications of prehomogeneous modules
due to Sato and Kimura \cite{SK} and Kimura et al.~\cite{kimuraS,kimuraI}.
In Section \ref{sec_etale} we derive
criteria for reductive algebraic groups $G$ with one-dimensional
center to admit \'etale representations.
By Lemma \ref{lemma_regspecial}, an \'etale module for $G$
is a regular prehomogeneous module (in the sense of Section
\ref{sec_PV}),
but this is not necessarily true for the submodules of an
\'etale module.
A main tool for the investigation of reducible
modules is the following theorem proved by Baues \cite[Lemma 3.7]{baues}.

\begin{bthm}[Baues]
Let $G=\GL_1\times S$ with $S$ semisimple and let $(G,\rho,V)$ be
an \'etale module.
Suppose $(G,\rho,W)$ is a proper submodule of $(G,\rho,V)$.
Then $(G,\rho,W)$ is a non-regular prehomogeneous module.
\end{bthm}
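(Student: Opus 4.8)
The plan is to establish the two assertions of the statement separately: that $(G,\rho,W)$ is prehomogeneous, which is soft, and that it is non-regular, which is where the hypothesis of a one-dimensional centre does the work. Throughout I take $0\neq W\subsetneq V$, as the conclusion forces.

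First I would prove prehomogeneity. Since $G$ is reductive we may split $V=W\oplus W'$ with $W'$ a $G$-submodule, so that the projection $\pi\colon V\to W$ along $W'$ is $G$-equivariant. Let $\Omega=G\cdot v$ be the open orbit of $(G,\rho,V)$ and put $w:=\pi(v)$. By equivariance $\pi(\Omega)=G\cdot w$, while surjectivity of $\pi$ together with density of $\Omega$ gives $W=\pi(\overline{\Omega})\subseteq\overline{\pi(\Omega)}=\overline{G\cdot w}$. Hence $G\cdot w$ is a dense orbit in $W$; being locally closed it is open, so $(G,\rho,W)$ is prehomogeneous with open orbit $\Omega_W:=G\cdot w$. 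This step uses only the $G$-equivariance of $\pi$, not \'etaleness.

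For non-regularity I would argue by contradiction, assuming $(G,\rho,W)$ regular. Since $(G,\rho,V)$ is \'etale it is regular as well by Lemma~\ref{lemma_regspecial}. Now for a regular prehomogeneous module the singular set is exactly the zero set of the relative invariant(s); the character lattice of $G=\GL_1\times S$ is free of rank one because $S$ is semisimple; and on any prehomogeneous module two relative invariants with equal character are proportional, their ratio being a $G$-invariant rational function and hence constant. So, up to scalars and powers, there is a single irreducible relative invariant $f_V$ on $V$ with $V\setminus\Omega=\{f_V=0\}$ and a single one $f_W$ on $W$ with $W\setminus\Omega_W=\{f_W=0\}$, both nonconstant since the origin---a fixed point of the linear action---lies outside the open orbit. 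The crucial step is to pull back: $\widetilde f_W:=f_W\circ\pi$ is a nonconstant relative invariant on $V$, and comparing its character with that of $f_V$ (both are powers of the generator of the rank-one lattice, and a nonconstant relative invariant has nontrivial character, so a brief sign analysis excludes cancellation) gives $f_V^{\,a}=c\,\widetilde f_W^{\,b}$ with integers $a,b\geq 1$ and $c\in\CC^{\times}$. Consequently
\[
V\setminus\Omega=\{f_V=0\}=\{\widetilde f_W=0\}=\pi^{-1}\bigl(\{f_W=0\}\bigr)=\pi^{-1}(W\setminus\Omega_W),
\]
that is, $\Omega=\pi^{-1}(\Omega_W)$.

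This cylinder shape is contradictory. Fix $w\in\Omega_W$; then $(w,0)$ and $(w,w'')$ both lie in $\pi^{-1}(\Omega_W)=\Omega$ for every $w''\in W'$, so they are $G$-conjugate, say $g\cdot(w,0)=(w,w'')$. Applying $\pi$ gives $g\cdot w=w$; but the action preserves the decomposition $V=W\oplus W'$, so $g\cdot(w,0)=(g\cdot w,0)=(w,0)$ by linearity, forcing $w''=0$. Thus $W'=0$ and $W=V$, contradicting $W\subsetneq V$; so $(G,\rho,W)$ is non-regular. I expect the identity $\Omega=\pi^{-1}(\Omega_W)$ to be the main obstacle: everything rests on the three prehomogeneous-space facts feeding into it (regularity $\Rightarrow$ singular set $=$ zero set of the relative invariant; one-dimensional centre $\Rightarrow$ the characters of $f_V$ and $\widetilde f_W$ are commensurable; equal character $\Rightarrow$ proportional invariants). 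Once $\Omega$ is a cylinder the contradiction is immediate, and \'etaleness of $V$ enters only via Lemma~\ref{lemma_regspecial}.
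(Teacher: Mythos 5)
The paper offers no proof of this statement --- it is quoted verbatim from Baues \cite[Lemma 3.7]{baues} --- so there is no internal argument to compare against; judged on its own, your proof is correct and self-contained. The prehomogeneity half (equivariant projection onto a complemented summand, density of the image of the open orbit, Chevalley's theorem that orbits are locally closed) is fine. The non-regularity half correctly assembles the needed standard facts: by Theorem~\ref{thm_reductive}, regularity for reductive $G$ makes the singular set a hypersurface cut out by a polynomial relative invariant; $\Ch(\GL_1\times S)\cong\ZZ$ makes the characters of $f_V$ and $f_W\circ\pi$ commensurable, and the sign analysis you allude to is genuinely needed but does work (opposite signs would make $f_V^{|b|}(f_W\circ\pi)^{|a|}$ a nonconstant absolute invariant, contradicting prehomogeneity); and proportionality of relative invariants with equal character then gives $\Omega=\pi^{-1}(\Omega_W)$, a cylinder, which is incompatible with $\Omega$ being a single orbit of an action preserving $V=W\oplus W'$ unless $W'=0$. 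Two remarks. First, \'etaleness enters only through Lemma~\ref{lemma_regspecial}, so you have actually proved the stronger assertion that any \emph{regular} prehomogeneous module for a group with one-dimensional center is quasi-irreducible in the sense of Rubenthaler \cite{rubenthaler}, which is consistent with the remark following Theorem~\ref{mthm_nonreg}. Second, you exclude $W=0$ by fiat; that matches the paper's usage (submodules are nonzero sums of irreducible components), but it is worth saying explicitly since the zero module is vacuously regular and would otherwise be a spurious counterexample.
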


Combining this with Theorem \ref{mthm:castling}, we find a
non-existence result for a certain class of reductive groups:

\begin{mthm}\label{mthm:noetale_SLn}
Let $G=\GL_1\times S\times\overset{k}{\cdots}\times S$,
where $S$ is a simple algebraic group and $k\geq 2$.
Then $G$ has no \'etale representations.
\end{mthm}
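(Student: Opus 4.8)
The plan is to combine Theorem~\ref{mthm:castling} with the theorem of Baues quoted above, and use the fact that an \'etale module is regular (Lemma~\ref{lemma_regspecial}). Suppose for contradiction that $G=\GL_1\times S^k$ with $k\ge 2$ admits an \'etale module $(G,\rho,V)$. Since the center of $G$ is one-dimensional, $\GL_1$, we are precisely in the setting where we can apply invariant-theoretic constraints to the irreducible submodules of $V$. First I would decompose $V=V_1\oplus\cdots\oplus V_r$ into irreducible $G$-submodules and analyze how the single $\GL_1$ and the $k$ copies of $S$ distribute across the summands.

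The key step is to show that each irreducible submodule $V_j$ is a non-regular prehomogeneous module: this is exactly the content of Baues' theorem, provided each $V_j$ is a proper submodule. If $r=1$, i.e.\ $V$ itself is irreducible, then $(G,\rho,V)$ is an irreducible \'etale module for a reductive group with one-dimensional center that is $k$-simple with $k\ge2$; but Theorem~\ref{mthm:castling} tells us that an irreducible prehomogeneous module for a reductive group has the form $(L\times\SL_{m_1}\times\cdots\times\SL_{m_k},\sigma\otimes\omega_1\otimes\cdots\otimes\omega_1,\ldots)$ with $L$ having a \emph{single} simple factor and $S$ must be of type $A$, namely $S=\SL_m$; moreover the \'etale (hence regular) condition forces the product of dimensions to equal $\dim G$, and the coprimality conditions (1) and (2) severely restrict the $m_i$. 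I would check that no such configuration with all $m_i$ equal to a fixed value (since all simple factors are the same group $S$) can satisfy both $\dim V=\dim G$ and the gcd conditions when $k\ge2$: condition (1) would require $\gcd(m,m)=1$, i.e.\ $m=1$, which is absurd. So $r\ge2$, and every $V_j$ is a proper, hence non-regular, prehomogeneous submodule.

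Now I would exploit the tension between "$V$ is regular" and "every $V_j$ is non-regular." A prehomogeneous module is regular iff its (generic) isotropy Lie algebra is reductive, equivalently iff it has a relative invariant whose Hessian is nondegenerate on the open orbit; regularity of $V=\bigoplus V_j$ should relate to the behaviour of the $\GL_1$-weights and the fundamental relative invariants on the individual summands. The point is that, since there is only one $\GL_1$, at most one summand $V_j$ can carry a relative invariant in a way compatible with regularity of the whole; but a reductive-group prehomogeneous module is regular precisely when it has "enough" relative invariants to make the full character lattice contribute, and with a one-dimensional center this means essentially one relative invariant must already control everything. I would make this precise using the castling/rank bookkeeping of Section~\ref{sec:castling} together with Theorem~\ref{mthm:castling}(2): the constraint that $\gcd(n,m_i)=1$ for all but at most one index, applied to each irreducible submodule $V_j\cong V^{n_j}\otimes\CC^{m_1}\otimes\cdots$, and the requirement that the $k$ copies of $S$ in $G$ all act on $V$ with the \emph{same} simple group forces the $S$-factors to be distributed so that some copy of $S$ acts on two different summands through $\SL_{m}$ with the \emph{same} $m$, violating condition (1) of Theorem~\ref{mthm:castling} for that pair of indices.

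The main obstacle I anticipate is the bookkeeping of exactly how the $k$ fixed copies of $S$ can be spread over the irreducible summands $V_1,\ldots,V_r$ and still realize each $V_j$ as a prehomogeneous module of the shape dictated by Theorem~\ref{mthm:castling}: one must rule out, in particular, the possibility that the copies of $S$ are partitioned into singletons each acting on its own summand (so that Baues' non-regularity applies to each $V_j$ individually, but one has to then sum the codimension-$1$ defects and contradict $\dim V=\dim G$ — an \'etale module has codimension $0$, whereas a direct sum of $r\ge2$ genuinely non-regular pieces should force positive total codimension unless the $\GL_1$ "glues" them, which a single $\GL_1$ cannot do for $r\ge2$). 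Making that last dimension/codimension count rigorous, rather than heuristic, is where the real work lies; the coprimality conditions from Theorem~\ref{mthm:castling} are what I expect to close the gap, since they are exactly strong enough to prevent two distinct summands from each being built on a copy of the same $\SL_m$.
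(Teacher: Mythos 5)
You have assembled the right toolkit (Theorem~\ref{mthm_nonreg}, Theorem~\ref{mthm:castling}, Lemma~\ref{lemma_regspecial}), but both contradictions you actually derive rest on misapplications of condition (1) of Theorem~\ref{mthm:castling}, and the step you yourself flag as ``where the real work lies'' is exactly the step that is missing. In the irreducible case, your contradiction ``$\gcd(m,m)=1$'' requires \emph{two} of the castling factors $\SL_{m_1},\ldots,\SL_{m_k}$ in the statement of Theorem~\ref{mthm:castling} to be copies of $\SL_m$; but one copy of $S=\SL_m$ is absorbed into the one-simple factor $L$, so for $k=2$ there is only a single castling factor and conditions (1) and (2) are both vacuous --- nothing is contradicted. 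The paper closes this case by a different route (Lemma~\ref{lem:noetale_irrep_SLn}): an irreducible \'etale module must be castling-equivalent to one of SK I-4, SK I-8, SK I-11 from Proposition~\ref{prop_SKspecial}, each of which already has two $\SL$-factors of distinct sizes, and any nontrivial castling transform of these has at least three simple factors, two of which are coprime castling factors. You never invoke the classification of irreducible \'etale modules, and without it the $k=2$ irreducible case stays open.

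In the reducible case, your final contradiction --- ``some copy of $S$ acts on two different summands through $\SL_m$ with the same $m$, violating condition (1)'' --- is not a valid application of Theorem~\ref{mthm:castling}: conditions (1) and (2) constrain the simple factors occurring in a \emph{single} irreducible prehomogeneous module, and say nothing about one simple factor acting on two distinct summands. The correct mechanism is to produce one irreducible summand $V_1$ on which at least \emph{two} copies of $S$ act non-trivially, note that $V_1$ is non-regular by Theorem~\ref{mthm_nonreg}, hence castling-equivalent to one of the non-regular reduced modules SK III-1 through SK III-4, whose groups (in the relevant cases) are of the form $\GL_1\times\SL_m\times\SL_n$ with $m\neq n$, and then apply Theorem~\ref{mthm:castling} to $V_1$ \emph{alone} to conclude that two of the copies of $S$ acting on it have coprime, hence distinct, sizes. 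Producing such a summand is precisely the ``singleton partition'' obstacle you defer: it is resolved not by a codimension count but by observing that if every summand had at most one $S_i$ acting non-trivially, then $V$ would split as a direct sum of \'etale modules for the groups $\GL_1\times S_i$, each of which is a \emph{regular} proper submodule of $V$ by Lemma~\ref{lemma_regspecial}, directly contradicting Theorem~\ref{mthm_nonreg}. As written, your proposal does not contain a complete argument for either the irreducible or the reducible case.
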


\subsection*{Notations and conventions}

We write $V^m$ to emphasize that the dimension of a vector space
$V$ is $m$.

The unit element of a group $G$ is denoted by $1$ or $1_G$.
For matrix groups, we also use $I_n$ to denote the identity matrix.
When writing $\GL_n$ (resp. $\SL_n$, $\Sp_n$, $\SO_n$, $\Spin_n$),
we always assume the complex numbers as the coefficient field.

For convenience, we will often denote a module $(\rho,V)$
by the representation $\rho$ only.
In this case, we also write $\dim\rho$ for $\dim V$.
The dual representation (or module) is denoted by $(\rho^*,V^*)$.
The notation $\rho^{(*)}$ means either $\rho$ or its dual $\rho^*$.

It is well-known that an irreducible representation $\rho$
of a semisimple Lie algebra $\frg$ is uniquely determined by its
highest weight $\omega$.
After the choice of a Cartan subalgebra of $\frg$,
$\omega$ is a unique integral linear combination
$m_1\omega_1+\ldots+m_n \omega_n$ of the fundamental weights
$\omega_1,\ldots,\omega_n$ of $\frg$.
For brevity we often write $\omega$ when we mean the
``representation $\rho$ with highest weight $\omega$''.
The representation of $\ggl_1$ (or $\GL_1$)
by scalar multiplication on a
vector space is denoted by $\mu$.
The trivial representation for any group is denoted by $1$.

\subsection*{Acknowledgements}
Dietrich Burde acknowledges support by the Austrian Science Foundation FWF, grant P28079 and grant I3248.
Wolfgang Globke is supported by the Australian Research Council grant DE150101647.
He would also like to thank Oliver Baues for introducing him to this subject several years ago, and for many clarifying comments.
We would also like to thank an anonymous referee for
suggesting some clarifications in the text and pointing out
the references \cite{vinberg}.


\section{Basics of prehomogeneous modules}\label{sec_PV}

\subsection{Prehomogeneous modules and relative invariants}

Let $(G,\rho,V)$ be a prehomogeneous module.
The points $v$ in the open orbit of $G$ are called \emph{generic
points}, and the stabilizer $G_v=\{g\in G\mid gv=v\}$
at a generic point $v$ is
called the \emph{generic isotropy subgroup}, its Lie algebra
$\frg_v$ is called the \emph{generic isotropy subalgebra}.
The \emph{singular set} $V_0=V\backslash\rho(G)v$
is the complement of the open orbit in $V$.

Prehomogeneity is equivalent to
\[
\dim G_v=\dim G - \dim V
\]
and to $V=\{\d\rho(A)v\mid A\in\frg\}$.
In particular, if $\rho$ is \'etale, then $G_v$ is a finite
(since algebraic) subgroup and $\d\rho(\cdot)v:\frg\to V$ is a vector
space isomorphism.

Prehomogeneous modules are to a large extent characterized
by their \emph{relative invariants}, that is, those rational
functions $f:V\to\CC$ satisfying
\[
f(gv) = \chi(g) f(v),
\]
where $g\in G$ and $\chi\in\Ch(G)=\{\chi:G\to\CC^\times\mid \chi
\text{ is a rational homomorphism}\}$.
Prehomogeneity of $(G,\rho,V)$ is equivalent to the fact that
any absolute invariant (that is, with character $\chi=1$)
is a constant function.

Given a relative invariant $f$ of $(G,\rho,V)$, define a map
\[
\phi_f:V\backslash V_0\to V^*,
\quad x\mapsto\grad\log f(x).
\]
If the image of $\phi_f$ is Zariski-dense in $V^*$, then we
call $f$ a \emph{non-degenerate} relative invariant, and
$(G,\rho,V)$ a \emph{regular} prehomogeneous module.
For reductive algebraic groups $G$, we have the following
characterization of regular prehomogeneous modules
(see Kimura \cite[Theorem 2.28]{kimura}).

\begin{thm}\label{thm_reductive}
Let $G$ be a reductive algebraic group and $(G,\rho,V)$ a
prehomogeneous module.
Then the following are equivalent:
\begin{enumerate}
\item
$(G,\rho,V)$ is a regular prehomogeneous module.
\item
The singular set $V_0$ is a hypersurface.
\item
The open orbit $\rho(G)v=V\backslash V_0$ is an affine
variety.
\item
Each generic isotropy subgroup $G_v$ for $v\in V\backslash V_0$
is reductive.
\item
Each generic isotropy subalgebra $\frg_v$ for $v\in V\backslash V_0$
is reductive in $\frg=\Lie(G)$.
\end{enumerate}
\end{thm}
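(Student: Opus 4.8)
The plan is to prove that the four conditions (2)--(5) are equivalent by general principles valid for any reductive group action with a dense orbit, and then to tie the analytic regularity condition (1) to this block using the Sato--Kimura theory of relative invariants. First I would dispose of $(4)\Leftrightarrow(5)$. Since $G_v$ is an algebraic subgroup of $G$ and we work over $\CC$ in characteristic $0$, the group $G_v$ is reductive if and only if its Lie algebra $\frg_v$ is reductive in $\frg$ in Mostow's sense, i.e.\ $\mathrm{ad}_\frg(\frg_v)$ acts completely reducibly on $\frg$; this is a standard fact about subgroups of reductive linear algebraic groups.

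Next, $(3)\Leftrightarrow(4)$ is Matsushima's criterion. In characteristic $0$ the orbit map identifies the open orbit $\rho(G)v$ with the homogeneous space $G/G_v$ as varieties, and Matsushima's theorem asserts that for reductive $G$ the quotient $G/G_v$ is an affine variety precisely when $G_v$ is reductive. Then $(2)\Leftrightarrow(3)$ is pure geometry on $V\cong\CC^N$. If $V_0$ is a hypersurface, then, since the coordinate ring of $V$ is a unique factorization domain, $V_0=\{f=0\}$ for a single polynomial $f$, and its complement $\rho(G)v$ is the principal open set where $f\neq 0$, which is affine. Conversely, if $V_0$ had an irreducible component of codimension at least $2$, then the algebraic Hartogs lemma (normality of $V$) would force every regular function on the open orbit to extend to all of $V$; the open orbit and $V$ would then share the same coordinate ring and hence coincide, contradicting that the open orbit is a proper subset. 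Thus affineness of the open orbit forces $V_0$ to be pure of codimension $1$, and $(2)\Leftrightarrow(3)\Leftrightarrow(4)\Leftrightarrow(5)$ is established.

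It remains to connect regularity (1) to this block, and here the reductivity of $G$ enters through the structure theory of relative invariants. For $(1)\Rightarrow(2)$ I would take a non-degenerate relative invariant $f$ with character $\chi$; its zero locus is $G$-stable and disjoint from the open orbit, so $\{f=0\}\subseteq V_0$. The gradient map $\phi_f$ is $G$-equivariant from $(\rho,V)$ to the dual module $(\rho^*,V^*)$, and non-degeneracy means $\phi_f$ is dominant, hence, as $\dim V=\dim V^*$, generically finite and restricting to an isomorphism of the open orbit of $V$ onto the open orbit of the dual prehomogeneous module $(G,\rho^*,V^*)$; tracking this isomorphism shows the open orbit equals $V\setminus\{f=0\}$, so $V_0$ is the hypersurface $\{f=0\}$. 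For $(2)\Rightarrow(1)$ I would use that, $G$ being reductive, each codimension-$1$ component of $V_0$ is the zero set of an irreducible relative invariant; taking $f$ to be their product gives $V_0=\{f=0\}$, and the remaining task is to prove $f$ non-degenerate.

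The main obstacle is precisely this last point: passing from the geometric hypersurface condition to the analytic non-degeneracy of $f$, equivalently showing that the dual module $(G,\rho^*,V^*)$ is again prehomogeneous with $\phi_f$ inducing a birational correspondence of open orbits. This is where reductivity is indispensable and where the real work of Kimura's Theorem 2.28 \cite{kimura} lies. I expect to handle it via the duality theory for regular prehomogeneous modules: one shows that the Hessian $\det\bigl(\partial^2\log f/\partial x_i\partial x_j\bigr)$ is not identically zero, either by exhibiting a dual relative invariant $f^*$ on $V^*$ together with the functional equation relating $f$ and $f^*\circ\phi_f$, or by invoking the already-established implication $(2)\Rightarrow(4)$ to know $G_v$ is reductive and then leveraging the reductivity of $G_v$ to produce the dual prehomogeneous structure. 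Once non-degeneracy is secured the loop closes and all five conditions are equivalent.
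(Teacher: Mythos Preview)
The paper does not give its own proof of this theorem; it is quoted with the attribution ``(see Kimura \cite[Theorem 2.28]{kimura})'' and used thereafter as a black box. So there is nothing in the paper to compare your argument against beyond the citation. Your outline is in fact the standard route taken in Kimura's book: the block $(2)\Leftrightarrow(3)\Leftrightarrow(4)\Leftrightarrow(5)$ via Matsushima's criterion and the pure-codimension-one complement argument, together with the link between regularity and the hypersurface condition through the structure theory of relative invariants for reductive groups.

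One genuine gap in your sketch is the step $(1)\Rightarrow(2)$. You correctly note $\{f=0\}\subseteq V_0$ and that $\phi_f$ is dominant, but the assertion ``tracking this isomorphism shows the open orbit equals $V\setminus\{f=0\}$'' is not justified: a priori $\{f\neq 0\}$ is a $G$-stable open set that could strictly contain the open orbit, and dominance of $\phi_f$ alone does not rule this out. The argument in Kimura goes instead through $(1)\Rightarrow(4)$ (or equivalently through the non-vanishing of the Hessian of $\log f$ and the resulting prehomogeneity of the dual module), after which $(2)$ follows from the already-established chain $(4)\Rightarrow(3)\Rightarrow(2)$. You have correctly flagged $(2)\Rightarrow(1)$ as the place where the real work lies, and your plan for it is the right one.
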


\subsection{Castling and promotion}

Two modules $(G_1,\rho_1,V_1)$ and
$(G_2,\rho_2,V_2)$ (or representations $\rho_1$ and $\rho_2$)
are called \emph{equivalent} if there exists
an isomorphism of groups $\psi:\rho_1(G_1)\to\rho_2(G_2)$ and a
linear isomorphism $\phi:V_1\to V_2$ such that
$\psi(\rho_1(g))\phi(x)=\phi(\rho_1(g)x)$ for all $x\in V_1$
and $g\in G_1$.

\begin{remark}\label{rem:dual}
If $G$ is reductive, then the dual representation
$\rho^*:G\to\GL(V^*)$ of any given representation $\rho:G\to\GL(V)$
is equivalent to $\rho$.
This follows from a result by Mostow \cite{mostow}.
\end{remark}

Let $m>n\geq 1$ and $\rho:G\to\GL(V^m)$ be a finite-dimensional
rational representation of an algebraic group $G$.
Then we say the modules
\[
\bigl( G\times \GL_n,\ \rho\otimes\omega_1,\ V^m \otimes \CC^n \bigr)
\quad \mathrm{and}\quad
\bigl( G\times \GL_{m-n},\ \rho^*\otimes\omega_1,\ V^{m*} \otimes \CC^{m-n} \bigr)
\]
are \emph{castling transforms}\index{castling transform} of each
other.
More generally, we say two modules $(G_1,\rho_1,V_1)$ and
$(G_2,\rho_2,V_2)$ are \emph{castling-equivalent}
if $(G_1,\rho_1,V_1)$ is equivalent to a module
obtained after a finite number of castling transforms from
$(G_2,\rho_2,V_2)$.

We say a module $(G,\rho,V)$ is \emph{reduced} (or \emph{castling-reduced}) if $\dim V\leq \dim V'$ for every castling transform
$(G,\rho',V')$ of $(G,\rho,V)$.

\begin{thm}[Sato \& Kimura \cite{SK}]\label{thm_sato}
Let $m>n\geq 1$ and $\rho:G\to\GL(V^m)$ be a finite-dimensional
rational representation of an algebraic group $G$.
Then 
\[
\bigl( G\times \GL_n,\ \rho\otimes\omega_1,\ V^m \otimes \CC^n \bigr)
\]
is a prehomogeneous module (with generic isotropy subgroup $H^{(n)}$)
if and only if its castling transform
\[
\bigl( G\times \GL_{m-n},\ \rho^*\otimes\omega_1,\ V^{m*} \otimes \CC^{m-n} \bigr)
\]
is prehomogeneous (with generic isotropy subgroup $H^{(m-n)}$).
Furthermore, $H^{(n)}$ and $H^{(m-n)}$ are isomorphic.

Addendum: If $G$ is reductive and its center acts by scalar multiplication
on $V^m\otimes\CC^n$, then
we can replace every occurrence of $\GL_n$ by $\SL_n$ in the
above statement, and prehomogeneity of
 \[
\bigl( G\times \SL_n,\ (\sigma\otimes 1)\oplus(\rho\otimes\omega_1),\ V^k\oplus (V^m \otimes \CC^n) \bigr)
\]
is equivalent to prehomogeneity of
\[
\bigl( G\times \SL_{m-n},\ (\sigma^*\otimes 1)\oplus(\rho\otimes\omega_1),\ V^{k*}\oplus (V^m \otimes \CC^{m-n}) \bigr).
\]
\end{thm}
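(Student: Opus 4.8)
The plan is to reduce both prehomogeneity statements to a single intrinsic condition on $G$ alone: the existence of a dense $G$-orbit on the Grassmannian $\mathrm{Gr}(n,V^m)$ of $n$-dimensional subspaces of $V^m$. View $V^m\otimes\CC^n$ as the space of linear maps $\CC^n\to V^m$, and let $U$ be the open locus of maps of full rank $n$; its complement, the rank-$<n$ determinantal variety, has positive codimension since $n\le m$, so $U$ is dense. Because rank is invariant under $\rho(g)\otimes\omega_1(h)$, any open (hence any dense) orbit consists of full-rank maps and therefore lies in $U$. Sending a full-rank map to its image gives a surjective $G$-equivariant morphism $\pi\colon U\to\mathrm{Gr}(n,V^m)$ that is a Zariski-locally-trivial $\GL_n$-bundle (hence open), with $\GL_n$ acting trivially on the base and simply transitively on fibers. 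The technical core is a fibration lemma: for an $H$-bundle $\pi\colon U\to B$ over an irreducible $U$, with $H$ commuting with $G$, trivial on $B$ and transitive on fibers, $G\times H$ has a dense orbit on $U$ iff $G$ has a dense orbit on $B$; one direction pushes the orbit forward along $\pi$, the other uses $(G\times H)\cdot u=\pi^{-1}(G\cdot\pi(u))\supseteq\pi^{-1}(O)$ for the open base orbit $O$, which is nonempty open, hence dense, in the irreducible $U$.

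Applying the lemma with $H=\GL_n$ shows that $(G\times\GL_n,\rho\otimes\omega_1,V^m\otimes\CC^n)$ is prehomogeneous iff $G$ has a dense orbit on $\mathrm{Gr}(n,V^m)$, and the same argument gives that its castling transform is prehomogeneous iff $G$ has a dense orbit on $\mathrm{Gr}(m-n,V^{m*})$. I would then identify the two criteria through the $G$-equivariant annihilator isomorphism $\mathrm{Gr}(m-n,V^{m*})\cong\mathrm{Gr}(n,V^m)$, $W'\mapsto(W')^\circ$, which is equivariant precisely because $\rho^*(g)$ preserves $W'$ iff $\rho(g)$ preserves its annihilator; this proves the equivalence. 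The same duality yields the isotropy statement: the stabilizer of a generic full-rank $A$ projects isomorphically onto the stabilizer $G_W$ of $W=\im A$, since the $\GL_n$-component of any stabilizing pair is forced to equal $\rho(g)|_W$ read in the basis $A$. Thus $H^{(n)}\cong G_W$ and, on the transform side, $H^{(m-n)}\cong G_{W'}$; choosing the generic subspaces with $W=(W')^\circ$ gives $G_W=G_{W'}$ by annihilator duality, hence $H^{(n)}\cong H^{(m-n)}$.

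For the addendum I would first replace $\GL$ by $\SL$. The hypothesis that $\mathrm{Z}(G)$ acts by scalars forces $\rho|_{\mathrm{Z}(G)}$ to be scalar, so a central element acting by $\lambda$ on $V^m$ acts on $V^m\otimes\CC^n$ exactly like the scalar matrix $\lambda I_n\in\GL_n$; writing any $\GL_n$-element as an $\SL_n$-element times a scalar and absorbing the scalar into $\mathrm{Z}(G)$ shows that $G\times\GL_n$ and $G\times\SL_n$ have the same image in $\GL(V^m\otimes\CC^n)$, hence the same orbits (and likewise on the transform, using $\rho^*$), so prehomogeneity is unaffected. The spectator $V^k$ then rides along: since $\SL_n$ acts trivially on it, the relevant fibration becomes $\mathrm{id}\times\pi$ and the lemma reduces prehomogeneity to a dense $G$-orbit on $V^k\times\mathrm{Gr}(n,V^m)$. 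The transform produces $V^{k*}\times\mathrm{Gr}(n,V^m)$ with $G$ acting through $\sigma^*$ and $\rho$; by Remark \ref{rem:dual} the $G$-modules $(\sigma,V^k)$ and $(\sigma^*,V^{k*})$ are equivalent, as are $(\rho,V^m)$ and $(\rho^*,V^{m*})$ (which justifies rewriting the transformed tensor factor as $\rho\otimes\omega_1$ on $V^m\otimes\CC^{m-n}$), so the two dense-orbit criteria coincide and the second equivalence follows.

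I expect the main obstacle to be the fibration lemma and its careful application: one must know that a dense orbit is automatically open and of constant full rank (so that it genuinely lies in $U$), that $\pi$ is open (so that the preimage of the open base orbit is dense in the irreducible $U$), and --- in the $\SL$ case --- that the failure of $\SL_n$ to act transitively on the fibers is repaired exactly by the scalar action of $\mathrm{Z}(G)$. This last point is why the scalar hypothesis is indispensable in the addendum, and it is the step where I would be most careful to check that no orbits are created or destroyed in passing between $\GL_n$ and $\SL_n$.
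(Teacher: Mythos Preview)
The paper does not supply a proof of this theorem: it is stated as a quoted result of Sato and Kimura \cite{SK} (with the addendum essentially from Kimura's book \cite{kimura}), and the text moves directly to a remark and an example. So there is no ``paper's own proof'' to compare against.

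That said, your approach is exactly the classical one underlying the original result: identify $V^m\otimes\CC^n$ with $\mathrm{Hom}(\CC^n,V^m)$, pass to the open locus of full-rank maps, project to the Grassmannian $\mathrm{Gr}(n,V^m)$ along a principal $\GL_n$-fibration, and use the $G$-equivariant annihilator duality $\mathrm{Gr}(n,V^m)\cong\mathrm{Gr}(m-n,V^{m*})$ to match the two prehomogeneity conditions and the generic stabilizers. This is precisely the mechanism in \cite{SK} and \cite[\S 2, \S 7]{kimura}, so your sketch is correct and on the standard route rather than a genuinely different one. One small caution on the addendum: your ``absorb the scalar into $\mathrm{Z}(G)$'' step needs the image of $\mathrm{Z}(G)$ to be all of $\CC^\times$, not merely contained in the scalars; in the paper's applications there is always an explicit $\GL_1$-factor, which is the intended reading of the hypothesis, but you should state this explicitly when you write the argument out.
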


\begin{remark}
Castling transforms regular prehomogeneous modules into regular
prehomogeneous modules, and
\'etale modules into \'etale modules (because $H^{(n)}\cong H^{(m-n)}$).
\end{remark}

\begin{example}\label{ex:promotion}
Castling allows to add additional factors to the group.
Let $(G,\rho,V^m)$ be a reductive prehomogeneous module.
We can interpret it as
\[
(G,\rho,V^m)=(G\times\SL_1,\rho\otimes\omega_1,V^m\otimes\CC^1).
\]
The castling transform of this module is
\[
(G\times\SL_{m-1},\rho\otimes\omega_1,V^m\otimes\CC^{m-1}).
\]
We call a castling transform of this particular type
a \emph{promotion} of the module $(G,\rho,V^m)$.
\end{example}

%
%
\section{Castling transforms of irreducible prehomogeneous modules}\label{sec:castling}

In this section we do not assume that the
prehomogeneous modules are \'etale modules.
Our aim is to prove Theorem \ref{mthm:castling}.

\begin{lem}\label{lem:gcd}
Let $L$ be a reductive algebraic group with one simple factor and let
\[
(L\times\SL_m,\ \sigma\otimes\omega_1,\ V^n\otimes\CC^m)
\]
be a module with $(L,\sigma)\neq(\SL_n,\omega_1)$,
$\sigma$ irreducible and $m, n\geq 1$.
Let
\[
(L\times\SL_{m_1}\times\cdots\times\SL_{m_k},\
\sigma\otimes\omega_1\otimes\cdots\otimes\omega_1,\
V^n\otimes\CC^{m_1}\otimes\cdots\otimes\CC^{m_k})
\]
be castling-equivalent to the first module, with $k\geq 2$. Then:
\begin{enumerate}
\item
$\gcd(m_i,m_j)=1$ for $1\leq i<j\leq k$.
\item
$\gcd(n,m_i)=1$ for all but at most one
index $i_0\in\{1,\ldots,k\}$.
\item
$\gcd(n,m_{i_0})=\gcd(n,m)$ for this index $i_0$.
\end{enumerate}
\end{lem}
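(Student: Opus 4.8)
The idea is to track what a single castling transform does to the tuple of dimensions. Write the module schematically as $(L\times\SL_{m_1}\times\cdots\times\SL_{m_k},\ \sigma\otimes\omega_1\otimes\cdots\otimes\omega_1,\ V^n\otimes\CC^{m_1}\otimes\cdots\otimes\CC^{m_k})$, with $\sigma$ the (fixed) irreducible representation of the one-simple reductive group $L$. A castling transform acts on exactly one tensor slot: if we single out the $i$-th slot, the module is $(H\times\SL_{m_i},\ \tau\otimes\omega_1,\ W\otimes\CC^{m_i})$ with $H=L\times\prod_{j\neq i}\SL_{m_j}$, $\tau=\sigma\otimes\bigotimes_{j\neq i}\omega_1$, and $\dim W = n\prod_{j\neq i}m_j$. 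Castling in this slot replaces $(\dim W, m_i)$ by $(\dim W,\ \dim W - m_i)$. So, setting $N = n\prod_{j}m_j / m_i = n\prod_{j\neq i}m_j$ for the "ambient" dimension, the only move available is $m_i \mapsto N - m_i$, and crucially $N$ depends on the other $m_j$ but not on $m_i$ itself. Since $L$ has only one simple factor, the first slot (the $\sigma$-slot) can never itself be split off as an $\SL$-factor unless $(L,\sigma)=(\SL_n,\omega_1)$, which is excluded by hypothesis; this is what guarantees the shape of the module is preserved and that $n$ and $L,\sigma$ are genuinely inert under the allowed moves. (The subtlety that a castling might merge or create $\SL_1$ factors is handled by Example \ref{ex:promotion} / the reduced-form conventions, and can be mentioned but not belabored.)

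First I would prove the invariance statement: the quantities $\gcd(m_i,m_j)$ for $i\neq j$, and the "number of indices $i$ with $\gcd(n,m_i)>1$", are unchanged by any single castling move, hence constant along any castling-equivalence. For (1): a move $m_i\mapsto N-m_i$ with $N$ a multiple of $m_j$ (since $m_j$ divides $N = n\prod_{\ell\neq i}m_\ell$) satisfies $\gcd(N-m_i, m_j) = \gcd(m_i,m_j)$, and it does not touch any $\gcd(m_a,m_b)$ with $a,b\neq i$; so all pairwise gcds among the $m$'s are preserved. For (2): here $N = n\prod_{\ell\neq i}m_\ell$ is divisible by $n$, so likewise $\gcd(N-m_i, n) = \gcd(m_i, n)$, meaning each individual value $\gcd(n,m_i)$ is preserved by every move — in fact a stronger statement than needed. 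The same computation gives (3): along the way $\gcd(n,m_i)$ never changes for any $i$.

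Now I would run the base case. The hypothesis says $(L\times\SL_m,\ \sigma\otimes\omega_1,\ V^n\otimes\CC^m)$ is castling-equivalent to the multi-factor module. Regard this starting module as having the single slot $\SL_m$, i.e. $m_1 = m$ in the one-factor picture; adding the extra $\SL$-factors can only be done by promotions (Example \ref{ex:promotion}) applied in various slots, each introducing an $\SL_{m_j-1}$-type factor coprimality constraint. Unwinding: in the reduced one-factor module the single dimension-pair is $(n,m)$ — so at the start, before any factors are split off, the only relevant gcd is $\gcd(n,m)$, there are no pairs $\gcd(m_i,m_j)$, and there is at most one "bad" index, namely the slot carrying $m$, which is bad precisely when $\gcd(n,m)>1$. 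Combining the invariance from the previous paragraph with this base-case bookkeeping yields exactly conclusions (1), (2), and (3), with $i_0$ being (the descendant of) the original $\SL_m$-slot and $\gcd(n,m_{i_0}) = \gcd(n,m)$.

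The main obstacle I anticipate is purely bookkeeping rather than conceptual: carefully justifying that an arbitrary castling-equivalence between the one-factor module and the $k$-factor module can be organized so that every $\SL$-slot is visibly tracked — i.e. that no hidden merging of $\SL_1$ factors, no re-use of the $L$-factor as an $\SL$-factor (ruled out by $(L,\sigma)\neq(\SL_n,\omega_1)$ and $L$ having one simple factor), and no reordering, can spoil the invariants. Once the "one castling move = $m_i\mapsto n\prod_{j\neq i}m_j - m_i$ in a single inert-surrounded slot" normal form is established, the gcd computations $\gcd(N-m_i,d)=\gcd(m_i,d)$ for $d\mid N$ are one line each, and the lemma follows.
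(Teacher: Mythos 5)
Your proposal is correct and takes essentially the same route as the paper: the paper's proof is an induction on the number of castling transforms in which each step verifies precisely your identity $\gcd(N-m_i,d)=\gcd(m_i,d)$ for $d\mid N$ (applied with $d=m_j$ and $d=n$), with the initial promotion serving as the base case. Your repackaging of this as invariance of the individual gcds under each single move is the same computation in slightly different clothing.
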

\begin{proof}
Any sequence of castling transforms of the original module will
start with a promotion, adding a factor
$(\SL_{nm-1},\omega_1,\CC^{nm-1})$
to the module. But clearly, $\gcd(m,nm-1)=1=\gcd(n,nm-1)$,
and for $m_1=m$, $\gcd(n,m_1)=\gcd(n,m)$.

Suppose the claim holds after $\ell\geq 1$ castling transforms of 
the original module.
We may assume the groups are ordered such that $i_0=1$.
Apply another castling transform.
If the transform is a promotion, then we obtain a new factor
$\SL_{nm_1\cdots m_k-1}$, and the claim clearly holds for the new
module.
Otherwise, consider two cases:

First, suppose that $\SL_{m_1}$ is replaced
by $\SL_{m_1'}$ with $m_1'=nm_2\cdots m_k-m_1$.
By the induction hypothesis, for $i=2,\ldots,k$, we have
$\gcd(n,m_i)=1$ and also $\gcd(m_1',m_i)=1$, since every divisor of
$m_i$ divides $nm_2\cdots m_k$ but not $m_1$.
Suppose $d$ is a common divisor of $n$ and $m_1$. Then $d$ divides
both $nm_2\cdots m_k$ and $m_1$, hence $d$ divides $m_1'$.
Similarly, every divisor of $n$ and $m_1'$ divides $m_1$.
Hence $\gcd(n,m_1)=\gcd(n,m_1')$.

Now consider the case that a factor other than $\SL_{m_1}$,
say $\SL_{m_k}$, is replaced by the castling transform.
The new factor is $\SL_{m_k'}$ with $m_k'=nm_1\cdots m_{k-1}-m_k$.
By the induction hypothesis, every divisor of $m_i$ with $i\neq k$
divides $nm_1\cdots m_{k-1}$ but not $m_k$, hence not $m_k'$.
It follows that $\gcd(m_i,m_k')=1$ for all $i\neq k$.
Similarly, $\gcd(n,m_k')=1$.
Moreover, $\gcd(n,m_i)=1$ for all $2\leq i<k$ and
$\gcd(n,m_1)=\gcd(n,m)$ by the induction hypothesis.

So the claim holds after $\ell+1$ castling transforms, and the
lemma follows.
\end{proof}

\begin{lem}\label{lem:gcd_SLn}
For $m,n\geq 1$, let
\[
(L\times\GL_m,\ \sigma\otimes\omega_1,\ V^n\otimes\CC^m)
\]
be a module with $(L,\sigma)=(\SL_n,\omega_1)$ or
$(L,\sigma)=(\SL_3\times\SL_3,\omega_1\otimes\omega_1)$.
In the latter case, assume additionally that $\gcd(3,m)=1$.
Let
\[
(\SL_{m_1}\times\cdots\times\SL_{m_k},\
\omega_1\otimes\cdots\otimes\omega_1,\
\CC^{m_1}\otimes\cdots\otimes\CC^{m_k})
\]
be castling-equivalent to the first module, with $k\geq 2$. Then
$\gcd(m_i,m_j)=1$ for all $1\leq i<j\leq k$
but at most one pair of indices $i_0,j_0\in\{1,\ldots,k\}$.
\end{lem}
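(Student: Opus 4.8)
The plan is to adapt the inductive argument from Lemma \ref{lem:gcd} to the slightly different situation here, where the simple factor $L$ is itself of type $\SL$ (either one copy $\SL_n$ or two copies $\SL_3\times\SL_3$). The key structural observation is that now the ``first'' module already consists entirely of $\SL$-factors: in the case $(L,\sigma)=(\SL_n,\omega_1)$, the module is $(\SL_n\times\GL_m,\omega_1\otimes\omega_1,\CC^n\otimes\CC^m)$, which after the usual identification $\GL_m\rightsquigarrow\SL_m$ becomes a two-factor $\SL$-module, so the exceptional pair $i_0,j_0$ in the conclusion is just the original pair $\{n\text{-slot},m\text{-slot}\}$. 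In the case $(L,\sigma)=(\SL_3\times\SL_3,\omega_1\otimes\omega_1)$ the module is $(\SL_3\times\SL_3\times\GL_m,\omega_1\otimes\omega_1\otimes\omega_1,\CC^3\otimes\CC^3\otimes\CC^m)$; here $\gcd(3,3)=3\neq1$ gives the one tolerated coprimality failure, and the extra hypothesis $\gcd(3,m)=1$ guarantees that this is the \emph{only} failure to begin with.

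First I would set up the induction on the number $\ell$ of castling transforms applied, exactly as in Lemma \ref{lem:gcd}. The base case $\ell=0$ (or $\ell=1$ after the obligatory initial promotion adding a factor $\SL_{N-1}$ with $N$ the total dimension) is handled by the remark above together with the elementary fact that $\gcd(N-1,d)=1$ for any $d\mid N$, so a promotion never introduces a new bad pair. For the inductive step I would assume the conclusion holds after $\ell$ transforms, reorder the factors so the (at most one) bad pair is $\{m_1,m_2\}$, and apply one more castling transform. If it is a promotion, the new factor is coprime to everything and we are done. Otherwise some $\SL_{m_j}$ is replaced by $\SL_{m_j'}$ with $m_j'=N/m_j-m_j$, where $N=n_L\cdot m_1\cdots m_k$ collects the product of all the current slot-dimensions (including the fixed dimensions coming from $L$). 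The divisor bookkeeping is then identical to Lemma \ref{lem:gcd}: any prime dividing $m_i$ for $i\neq j$ divides $N/m_j$ but not $m_j$, hence not $m_j'$, so $\gcd(m_i,m_j')=1$; and for the bad pair, one checks $\gcd(m_1,m_2')=\gcd(m_1,m_2)$-type identities to see that the location, but not the presence or absence, of a bad pair can move.

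The one genuinely new wrinkle — and the step I expect to need the most care — is the $\SL_3\times\SL_3$ case, because there are \emph{two} fixed factors of the same dimension $3$ sitting in the module, and a castling transform replacing one of the mobile $\SL_{m_j}$'s could in principle create a new coprimality failure involving one of the two copies of $\CC^3$. This is exactly where the hypothesis $\gcd(3,m)=1$ earns its keep: since $3\nmid m$ and (inductively) $3\nmid m_i$ for every mobile slot, the integer $N/m_j$ is divisible by $3^2$ but by no higher power relevant to the argument, so $3\mid m_j'$ would force $3\mid m_j$, contradicting the induction hypothesis. Hence no mobile factor ever becomes divisible by $3$, the pair $\{3,3\}$ remains the unique bad pair throughout, and the conclusion follows. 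In the $(\SL_n,\omega_1)$ case there is no such issue — the fixed part contributes a single dimension $n$, the original $\{n,m\}$ pair is the unique a priori bad pair, and the same induction goes through — so assembling the two cases and invoking Theorem \ref{thm_sato} to justify that prehomogeneity (hence the relevant dimension identities) is preserved under castling completes the proof.
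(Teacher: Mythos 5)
The paper's own proof of this lemma is a single sentence (``mutatis mutandis identical to the proof of Lemma \ref{lem:gcd}''), and your overall strategy --- induction on the number of castling transforms with divisor bookkeeping, the invariant being ``at most one pair of factors with non-coprime dimensions'' --- is exactly the right adaptation. However, your write-up contains a step that fails: you partition the factors into ``fixed'' ones coming from $L$ and ``mobile'' ones $\SL_{m_j}$, and you only ever castle the mobile ones. This misses the whole reason this lemma is stated separately from Lemma \ref{lem:gcd}: here $L$ acts by $\omega_1$ (resp.\ $\omega_1\otimes\omega_1$), so the factor $\SL_n$ (resp.\ each copy of $\SL_3$) is itself a legitimate target for a castling transform. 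Concretely, your claims that ``no mobile factor ever becomes divisible by $3$'' and that ``the pair $\{3,3\}$ remains the unique bad pair throughout'' are false: starting from $\SL_3\times\SL_3\times\GL_7$ on $\CC^3\otimes\CC^3\otimes\CC^7$ (so $m=7$, $\gcd(3,m)=1$) and castling the first $\SL_3$ against the complementary space $\CC^3\otimes\CC^7=\CC^{21}$ produces $\SL_{18}\times\SL_3\times\SL_7$, where the unique bad pair has moved to $\{18,3\}$ and a factor divisible by $3$ has appeared. The conclusion of the lemma still holds in this example, but your argument does not cover the move that produced it.

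The repair is straightforward and brings you back to the paper's intent: drop the fixed/mobile distinction entirely, note that \emph{every} factor acts by $\omega_1$ and is therefore castlable, and run the induction symmetrically with the single invariant ``at most one pair $\{m_i,m_j\}$ has $\gcd(m_i,m_j)>1$.'' The bookkeeping you already wrote down for the general step does the job uniformly: if the castled factor $\SL_{m_j}$ is \emph{not} a member of the current bad pair, then any common divisor of $m_i$ and $m_j'=\bigl(\prod_{\ell\neq j}m_\ell\bigr)-m_j$ divides $m_j$, hence equals $1$ by the induction hypothesis, so no new bad pair is created; if it \emph{is} a member of the bad pair, say $\{m_1,m_2\}$ with $j=2$, the same computation gives $\gcd(m_1,m_2')=\gcd(m_1,m_2)$, so the bad pair may relocate but cannot multiply. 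The base cases (one potential bad pair $\{n,m\}$ for $(\SL_n,\omega_1)$; exactly the pair $\{3,3\}$ for $(\SL_3\times\SL_3,\omega_1\otimes\omega_1)$ with $\gcd(3,m)=1$) and the promotion step are as you state. With that correction your proof is complete and coincides with the paper's intended argument.
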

The proof is mutatis mutandis identical to the proof of Lemma
\ref{lem:gcd}.

{
\renewcommand{\themthm}{\ref{mthm:castling}}
\begin{mthm}
Let $(G,\rho,V)$ be an irreducible
prehomogeneous module for a reductive algebraic group.
Then:
\[
(G,\rho,V)
=
(L\times\SL_{m_1}\times\cdots\times\SL_{m_k},\
\sigma\otimes\omega_1\otimes\cdots\otimes\omega_1,\
V^n\otimes\CC^{m_1}\otimes\cdots\otimes\CC^{m_k}),
\]
where $L$ is a reductive algebraic group with one simple factor,
$\sigma$ is irreducible,
and $n,m_1,\ldots,m_k\geq 1$ such that
\begin{enumerate}
\item
$\gcd(m_i,m_j)=1$ for $1\leq i<j\leq k$.
\item
$\gcd(n,m_i)=1$ for all but at most one
index $i_0\in\{1,\ldots,k\}$.
\end{enumerate}
Moreover, if $(G,\rho,V)$ is castling-equivalent to a
one-simple irreducible module, then part (2) holds for all $i\in\{1,\ldots,k\}$.
\end{mthm}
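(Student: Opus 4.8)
The plan is to castle $(G,\rho,V)$ down to a castling-reduced representative, read off its shape from the Sato--Kimura classification, and then feed that reduced module into Lemma~\ref{lem:gcd} or Lemma~\ref{lem:gcd_SLn} as a ``seed'' to extract the numerical constraints. To begin, I would record that being of the asserted form
\[
(G,\rho,V)\cong\bigl(L\times\SL_{m_1}\times\cdots\times\SL_{m_k},\ \sigma\otimes\omega_1\otimes\cdots\otimes\omega_1,\ V^n\otimes\CC^{m_1}\otimes\cdots\otimes\CC^{m_k}\bigr)
\]
with $L$ reductive having one simple factor and $\sigma$ irreducible is stable under a single castling transform: such a transform either appends a new $\SL$-factor (a promotion) or replaces one $\SL_{m_i}$ by some $\SL_{m_i'}$ and passes to the dual, and by Remark~\ref{rem:dual} the dual representation of a reductive group is equivalent to it, so $\sigma^{*}$ is again irreducible and each $\omega_1^{*}$ is again $\omega_1$ up to module equivalence. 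It therefore suffices to establish the shape and conditions (1)--(2) for a single castling-reduced representative of the castling class of $(G,\rho,V)$, and then transport the conclusion back along the (finite) chain of castlings.

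For the shape I would use the structure theory: an irreducible rational representation of a reductive group factors as $\chi\otimes\rho_1\otimes\cdots\otimes\rho_r$ with $\chi$ a central character (at most one-dimensional, since $\rho$ is faithful up to a finite kernel) and $\rho_j$ irreducible on the simple factors, and it is castling-equivalent to a reduced one. Going through the Sato--Kimura list \cite{SK} of reduced irreducible prehomogeneous modules (see also \cite{kimura}), every entry is, up to equivalence, either of the form $(L_0\times\SL_m,\ \sigma_0\otimes\omega_1,\ V^n\otimes\CC^m)$ with $L_0$ reductive having one simple factor and $\sigma_0$ irreducible (the degenerate value $m=1$ being exactly the one-simple reduced modules), or one of the ``pure-$\SL$'' modules $(\SL_n,\omega_1)$ and $(\GL_3\times\GL_3,\omega_1\otimes\omega_1)$ appearing as the hypotheses of Lemma~\ref{lem:gcd_SLn}. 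Castling back, $(G,\rho,V)$ then has the claimed shape.

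To obtain conditions (1) and (2): if the reduced representative is $(L_0\times\SL_m,\sigma_0\otimes\omega_1,V^n\otimes\CC^m)$ with $(L_0,\sigma_0)\neq(\SL_n,\omega_1)$, take it as the seed in Lemma~\ref{lem:gcd}; any castling-equivalent module written in the asserted shape with $k\geq2$ then satisfies (1)--(3), in particular (1) and (2), while the cases $k\leq1$ are immediate (no pairs, and a single index is allowed to be the exception). If instead the reduced module is pure-$\SL$, relabel a \emph{suitable} tensor factor as $L=\SL_{m_1}$, $\sigma=\omega_1$, $n=m_1$: if Lemma~\ref{lem:gcd_SLn} produces an exceptional pair $\{i_0,j_0\}$ among the $\SL$-dimensions, choose one of those two factors to play the role of $L$; then $\gcd(m_i,m_j)=1$ for all remaining pairs, giving (1), and $\gcd(n,m_i)=1$ for all remaining indices except the other member of the exceptional pair, giving (2).

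Finally, for the ``moreover'': if $(G,\rho,V)$ is castling-equivalent to a one-simple irreducible module $(L',\sigma',W^{n'})$, then its promotion $(L'\times\SL_{n'-1},\ \sigma'\otimes\omega_1,\ W^{n'}\otimes\CC^{n'-1})$ is again castling-equivalent to $(G,\rho,V)$ and has coprime distinguished dimensions $\gcd(n',n'-1)=1$; using it as the seed in Lemma~\ref{lem:gcd} (when $(L',\sigma')\neq(\SL_{n'},\omega_1)$), part~(3) forces $\gcd(n,m_{i_0})=\gcd(n',n'-1)=1$ for the exceptional index, so (2) holds for all $i$; when $(L',\sigma')=(\SL_{n'},\omega_1)$ the same seed is pure-$\SL$ with coprime distinguished dimensions, and rerunning the argument of Lemma~\ref{lem:gcd_SLn} (which mirrors Lemma~\ref{lem:gcd} up to an analogue of part~(3)) shows that no exceptional pair occurs, so all $m_i$ are pairwise coprime and (2) holds everywhere. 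I expect the main obstacle to lie in the structural step --- checking case by case against the Sato--Kimura table that every reduced irreducible prehomogeneous module really has the stated shape --- together with the bookkeeping needed to align the single ``exceptional index $i_0$'' of (2) with the ``exceptional pair'' of Lemma~\ref{lem:gcd_SLn} and with the quantity $\gcd(n,m)$ that Lemma~\ref{lem:gcd}(3) propagates through the seed; in the ``moreover'' the crux is precisely the observation that the first promotion of a one-simple module already makes $\gcd(n,m)=1$.
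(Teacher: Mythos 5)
Your proposal is correct and follows essentially the same route as the paper: reduce via castling to the Sato--Kimura list of reduced irreducible modules, observe that each entry either has the form required by Lemma~\ref{lem:gcd} or is one of the pure-$\SL$ cases covered by Lemma~\ref{lem:gcd_SLn}, and feed the reduced module into the appropriate lemma as the seed. Your explicit verification that the asserted shape is castling-stable and your treatment of the ``moreover'' clause via Lemma~\ref{lem:gcd}(3) together with the coprimality of the one-simple seed's distinguished dimensions merely spell out what the paper's terser proof leaves implicit.
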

\addtocounter{mthm}{-1}
}
\begin{proof}
Every irreducible reductive prehomogeneous module is castling-equivalent to one of those classified by Sato and Kimura \cite[\S 7]{SK}, so it is enough to prove the theorem for those modules.
Lemma \ref{lem:gcd_SLn} proves the theorem for the irreducible
modules SK I-1 (with $(\SL_m,\omega_1)$), SK I-12, SK III-1
and SK III-2.
From the classification it is clear that every other reduced
irreducible module is of the form assumed in Lemma \ref{lem:gcd},
and so the theorem follows from this lemma in these cases.
\end{proof}

\begin{remark}
Every reductive prehomogeneous module decomposes into irreducible
ones, but since such a decomposition can be obtained by taking direct sums (that is, $(G_1,\rho_1,V_1)\oplus(G_2,\rho_2,V_2)=(G_1\times G_2,\rho_1\oplus\rho_2,V_1\oplus V_2))$,
it is not true that every
castling transform of non-irreducible prehomogeneous modules is
of the form described in Theorem \ref{mthm:castling}.
\end{remark}

%
%
\section{General properties of \'etale representations}\label{sec:general}

\'Etale modules were introduced in Section \ref{sec_intro}.
Here we present some structural results on \'etale modules,
whereas in the next section we provide many new examples of
\'etale modules for reductive groups.

\begin{prop}\label{cor_iso}\label{prop_iso}
The following conditions are equivalent:
\begin{enumerate}
\item $(G,\rho_1\oplus\rho_2,V_1\oplus V_2)$ is an \'etale
module.
\item $(G,\rho_1,V_1)$ is prehomogeneous and $(H,\rho_2|_H,V_2)$
is an \'etale module, where $H$ denotes the connected
component of the generic isotropy subgroup of $(G,\rho_1,V_1)$.
\end{enumerate}
Equivalence also holds if each ``\'etale'' is replaced by
``prehomogeneous''.
\end{prop}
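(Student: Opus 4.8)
The plan is to reduce everything to a single dimension count together with the infinitesimal characterization of prehomogeneity recalled in Section \ref{sec_PV}, namely that $(G,\rho,V)$ is prehomogeneous if and only if $V=\{\d\rho(A)v\mid A\in\frg\}$ for a suitable $v$, and that it is \'etale precisely when in addition $\dim V=\dim G$. First I would fix a generic point $v_1$ in the open $G$-orbit on $V_1$ (assuming $(G,\rho_1,V_1)$ is prehomogeneous), let $G_{v_1}$ be its isotropy group and $H=(G_{v_1})^\circ$ its identity component, so that $\dim H = \dim G - \dim V_1$ by the orbit-stabilizer dimension formula. The key observation is that a point $(v_1,v_2)\in V_1\oplus V_2$ lies in the open $G$-orbit if and only if $v_1$ is generic in $V_1$ and $v_2$ is generic for the $G_{v_1}$-action on $V_2$; this is just the statement that the $G$-orbit of $(v_1,v_2)$ fibers over the open orbit $G\cdot v_1$ with fiber the $G_{v_1}$-orbit of $v_2$. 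Since $G_{v_1}$ and $H$ differ only by finitely many components, $G_{v_1}$ has a dense orbit on $V_2$ iff $H$ does, so density of the $G$-orbit on $V_1\oplus V_2$ is equivalent to $(G,\rho_1,V_1)$ being prehomogeneous and $(H,\rho_2|_H,V_2)$ being prehomogeneous. This already gives the ``prehomogeneous'' version of the equivalence.

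For the \'etale statement I would then layer the dimension counts on top. Assuming (1), i.e. $(G,\rho_1\oplus\rho_2,V_1\oplus V_2)$ is \'etale, prehomogeneity forces $(G,\rho_1,V_1)$ to be prehomogeneous (projecting the open orbit to $V_1$), so the fibration picture applies; then $\dim V_1 + \dim V_2 = \dim G = \dim V_1 + \dim H$ gives $\dim V_2 = \dim H$, and combined with the prehomogeneity of $(H,\rho_2|_H,V_2)$ established above this says exactly that $(H,\rho_2|_H,V_2)$ is \'etale. Conversely, assuming (2), prehomogeneity of $(G,\rho_1,V_1)$ together with prehomogeneity of $(H,\rho_2|_H,V_2)$ gives prehomogeneity of the direct sum by the first part, and the dimension identity $\dim(V_1\oplus V_2)=\dim V_1+\dim H=\dim G$ (using $\dim H=\dim G-\dim V_1$ and $\dim V_2=\dim H$) shows it is \'etale.

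The one genuine point to be careful about — and the step I expect to be the main obstacle — is the claim that the open $G$-orbit on $V_1\oplus V_2$ projects \emph{onto} the open $G$-orbit on $V_1$ and that its intersection with each fiber $\{v_1\}\times V_2$ is a single $G_{v_1}$-orbit. For the "onto" direction one uses that the image of a dense orbit under an equivariant map is dense and that $G\cdot v_1$ is open; for the fiber description one checks that if $g\cdot(v_1,v_2)=(v_1,v_2')$ then $g\in G_{v_1}$, which is immediate. A secondary subtlety is passing between $G_{v_1}$ and its identity component $H$: a connected algebraic group has a dense orbit iff the full (possibly disconnected) group does, and the codimension of a generic orbit is unchanged, so the dimension bookkeeping is insensitive to this replacement; I would state this as a one-line remark rather than belabor it. Everything else is routine bookkeeping with $\dim G = \dim(\text{orbit}) + \dim(\text{stabilizer})$, and no further ideas are needed.
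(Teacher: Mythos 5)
Your proof is correct and follows essentially the same route as the paper: the paper simply cites Kimura's Lemma 7.2 for the equivalence ``$\rho_1\oplus\rho_2$ prehomogeneous $\Leftrightarrow$ $\rho_1$ and $\rho_2|_H$ prehomogeneous'' and then performs exactly the dimension bookkeeping $\dim G-\dim V_1=\dim H=\dim V_2$ that you carry out. The only difference is that you supply the fibration/orbit-stabilizer argument proving that cited lemma rather than quoting it, which is a fine (and standard) substitute.
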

\begin{proof}
By Kimura \cite[Lemma 7.2]{kimura}, $\rho_1\oplus\rho_2$ is
prehomogeneous if and only if $\rho_1$ and $\rho_2|_H$ are.
The representation $\rho_1\oplus\rho_2$ is \'etale if and only if
it is prehomogeneous and $\dim G=\dim V_1+\dim V_2$.

Suppose $\rho_1\oplus\rho_2$ is \'etale.
So $(G,\rho_1,V_1)$ and
$(H,\rho_2|_H,V_2)$ are prehomogeneous, and since
$\dim G-\dim H=\dim V_1$,
we have
$\dim H=\dim V_2$,
so $(H,\rho_2|_H,V_2)$ is \'etale.

Conversely, if we assume $(G,\rho_1,V_1)$ to be prehomogeneous and
$(H,\rho_2|_H,V_2)$ to be \'etale,
then $(G,\rho_1\oplus\rho_2,V_1\oplus V_2)$ is obviously prehomogeneous
and as 
\[
\dim G-\dim V_1=\dim H=\dim V_2,
\]
it is even \'etale.
\end{proof}

\begin{cor}\label{cor_redequ}
Let $(G,\rho_1,V_1)$ be a prehomogeneous module with reductive
generic isotropy subgroup. Then $(G,\rho_1\oplus\rho_2,V_1\oplus V_2)$ is \'etale if and only
if $(G,\rho_1\oplus\rho_2^*,V_1\oplus V_2^*)$ is \'etale,
and then their generic isotropy subgroups are isomorphic.
\end{cor}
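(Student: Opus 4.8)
The plan is to reduce the claim to a statement about the slice representation of the generic isotropy subgroup $G_1$ of $(G,\rho_1,V_1)$ at a generic point of $V_1$, and then exploit the self-duality of representations of reductive groups recorded in Remark \ref{rem:dual}.

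First I would fix a generic point $v_1$ of $(G,\rho_1,V_1)$ and let $G_1=G_{v_1}$ be the generic isotropy subgroup; by hypothesis $G_1$ is a reductive algebraic group. Since restriction commutes with dualization, $\rho_2^*|_{G_1}=(\rho_2|_{G_1})^*$, and because $G_1$ is reductive, Remark \ref{rem:dual} yields an equivalence of $G_1$-modules between $(G_1,\rho_2|_{G_1},V_2)$ and $(G_1,\rho_2^*|_{G_1},V_2^*)$. An equivalence of modules consists of an isomorphism of the image groups together with a compatible linear isomorphism of the representation spaces; in particular it preserves prehomogeneity and the dimension of the module (hence the \'etale property), and it carries generic points to generic points and conjugates the associated generic isotropy subgroups. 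Therefore $(G_1,\rho_2|_{G_1},V_2)$ is \'etale if and only if $(G_1,\rho_2^*|_{G_1},V_2^*)$ is \'etale, and in that case the two have isomorphic generic isotropy subgroups.

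Next I would invoke Kimura \cite[Lemma 7.2]{kimura} (already used in the proof of Proposition \ref{prop_iso}) in the precise form: if $v_2$ is a generic point of $(G_1,\rho_2|_{G_1},V_2)$, then $(v_1,v_2)$ is a generic point of $(G,\rho_1\oplus\rho_2,V_1\oplus V_2)$, and its stabilizer in $G$ is $G_{v_1}\cap G_{v_2}=(G_1)_{v_2}$. Consequently $(G,\rho_1\oplus\rho_2,V_1\oplus V_2)$ is prehomogeneous if and only if $(G_1,\rho_2|_{G_1},V_2)$ is, and since prehomogeneity of $(G,\rho_1,V_1)$ gives $\dim G_1=\dim G-\dim V_1$, the module $(G,\rho_1\oplus\rho_2,V_1\oplus V_2)$ is \'etale if and only if $(G_1,\rho_2|_{G_1},V_2)$ is; moreover in this case their generic isotropy subgroups coincide up to isomorphism. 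The same holds verbatim with $\rho_2, V_2$ replaced by $\rho_2^*, V_2^*$. Chaining these biconditionals with the previous paragraph shows that $(G,\rho_1\oplus\rho_2,V_1\oplus V_2)$ is \'etale if and only if $(G,\rho_1\oplus\rho_2^*,V_1\oplus V_2^*)$ is, and that when this holds both modules have generic isotropy subgroup isomorphic to that of $(G_1,\rho_2|_{G_1},V_2)$, hence to each other.

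The only genuinely delicate point — and the one where the reductivity hypothesis on the generic isotropy subgroup is essential — is the appeal to Remark \ref{rem:dual} for $G_1$: self-duality of representations is a feature of reductive groups and fails in general, so without that hypothesis the very first reduction would be unavailable. Everything else is the routine bookkeeping of dimensions and stabilizers already encoded in Proposition \ref{prop_iso}.
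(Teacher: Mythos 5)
Your proof is correct and takes essentially the same route as the paper's: apply the self-duality of representations of the reductive generic isotropy subgroup (Remark \ref{rem:dual}) and then reduce the \'etale condition on the direct sum to the restricted module via Proposition \ref{prop_iso}. The paper compresses this into two sentences; you have simply made the same argument explicit, correctly identifying that the duality must be applied to the isotropy subgroup (which is where the reductivity hypothesis enters) rather than to $G$ itself.
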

\begin{proof}
For a reductive group $G$, $(G,\rho,V)$ is equivalent to
$(G,\rho^*,V^*)$, see Remark \ref{rem:dual}.
The corollary now follows from Proposition \ref{cor_iso}.
\end{proof}

\subsection{Regularity of \'etale modules}

First, we note that non-regular prehomogeneous modules are not
\'etale modules for a reductive algebraic group.

\begin{lem}\label{lemma_regspecial}
Let $G$ be a reductive algebraic group.
If $(G,\varrho,V)$ is an \'etale module,
then it is a regular pre\-homogeneous module.
\end{lem}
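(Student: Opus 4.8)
The plan is to use the characterization of regular prehomogeneous modules for reductive groups given in Theorem \ref{thm_reductive}, specifically the equivalence between regularity and reductivity of the generic isotropy subgroup (condition (4) or (5)). So the task reduces to showing that an \'etale module for a reductive group has a reductive generic isotropy subgroup.

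First I would recall that, for an \'etale module $(G,\rho,V)$, prehomogeneity combined with $\dim G=\dim V$ forces the generic isotropy subgroup $G_v$ at a generic point $v$ to be $0$-dimensional: indeed $\dim G_v = \dim G - \dim V = 0$. Since $G$ is an algebraic group, $G_v$ is an algebraic subgroup, hence a finite group.

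The key observation is then simply that a finite group is reductive: a finite group has trivial unipotent radical (the unipotent radical is connected, hence trivial when the group is $0$-dimensional; alternatively, a finite group in characteristic zero is linearly reductive). Therefore every generic isotropy subgroup $G_v$, for $v$ in the open orbit $V\setminus V_0$, is reductive, and likewise its Lie algebra $\frg_v=0$ is trivially reductive in $\frg$. By Theorem \ref{thm_reductive}, condition (4) (equivalently (5)) implies condition (1), so $(G,\rho,V)$ is a regular prehomogeneous module.

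I do not anticipate a genuine obstacle here — the proof is essentially the remark that a finite group is reductive, fed into the already-quoted Theorem \ref{thm_reductive}. The only point worth a sentence of care is making sure the argument applies to \emph{all} generic points $v\in V\setminus V_0$ and not just one, but this is immediate since all generic isotropy subgroups are conjugate (the open orbit is a single $G$-orbit), so they are all finite simultaneously.
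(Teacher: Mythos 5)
Your proposal is correct and follows exactly the paper's own argument: the generic isotropy subgroup of an \'etale module is finite, hence reductive, and Theorem \ref{thm_reductive} then gives regularity. You simply spell out the two steps (finiteness from $\dim G_v=\dim G-\dim V=0$, and reductivity of finite groups) that the paper leaves implicit.
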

\begin{proof} The generic isotropy subgroup of an \'etale module is
finite, hence reductive.
By Theorem \ref{thm_reductive}, the module is regular.
\end{proof}

This lemma does not imply that any irreducible component of an
\'etale module must be regular.
In fact, it follows from Theorem \ref{mthm_nonreg}
that for groups with one-dimensional center, an \'etale module 
that contains a regular irreducible component
must be irreducible itself.

\subsection{Groups with trivial character group}
Let $\Ch(G)$ denote the character group of $G$
(the group of rational homomorphisms $\chi:G\to\CC^\times$).
The following proposition is possibly well-known. Since we do
not know a reference for it, we will give a proof here.

\begin{prop}\label{prop_trivchar}
Let $G$ be an algebraic group with $\Ch(G)=\{1\}$.
Then $G$ does not admit a rational linear \'etale representation.
\end{prop}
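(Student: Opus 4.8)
The plan is to exploit the defining property of relative invariants together with the fact that an \'etale module must be regular by Lemma~\ref{lemma_regspecial}. Suppose for contradiction that $(G,\rho,V)$ is a rational linear \'etale representation. Since it is \'etale, it is prehomogeneous, and by Lemma~\ref{lemma_regspecial} it is in fact a regular prehomogeneous module, so it possesses a non-degenerate relative invariant $f:V\to\CC$ with some character $\chi\in\Ch(G)$, i.e.\ $f(gv)=\chi(g)f(v)$ for all $g\in G$. The hypothesis $\Ch(G)=\{1\}$ forces $\chi=1$, so $f$ is an \emph{absolute} invariant and hence (by the characterization of prehomogeneity in terms of absolute invariants recalled in Section~\ref{sec_PV}) $f$ is constant. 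But a constant function cannot be a non-degenerate relative invariant: its logarithmic gradient $\phi_f=\grad\log f$ is identically zero, so its image is the single point $0\in V^*$, which is certainly not Zariski-dense in $V^*$. This contradicts regularity, and the proposition follows.

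The one gap to fill is the existence of a non-degenerate relative invariant for a regular prehomogeneous module. This is essentially the definition given in Section~\ref{sec_PV}: a prehomogeneous module is called regular precisely when it admits a non-degenerate relative invariant $f$, meaning the image of $\phi_f$ is Zariski-dense in $V^*$. So no extra work is needed there beyond quoting the definition. I should be slightly careful about the degenerate edge case $\dim V=0$: then $G$ is finite (being reductive with an \'etale, hence finite-kernel-up-to-finite-index, action on a point), and $V^*=\{0\}$ so the density condition is vacuous; but a finite group acting on a point genuinely is a (trivially regular) \'etale module, so one might worry the statement fails. However, for $\dim V\geq 1$ the argument above is airtight, and the only truly delicate point is whether the paper intends to exclude the trivial case. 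Assuming $\dim V\geq 1$ (equivalently $G$ is not finite), the proof is complete.

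The main obstacle, such as it is, is purely bookkeeping: making sure the chain ``\'etale $\Rightarrow$ regular $\Rightarrow$ existence of non-degenerate relative invariant $f$ $\Rightarrow$ $f$ has trivial character (since $\Ch(G)=\{1\}$) $\Rightarrow$ $f$ is an absolute invariant $\Rightarrow$ $f$ is constant $\Rightarrow$ $\phi_f$ is not dominant'' is logically tight, and in particular that the implication ``absolute invariant $\Rightarrow$ constant'' is being used in the correct direction (it is the reformulation of prehomogeneity recalled just before Theorem~\ref{thm_reductive}). Alternatively, one could bypass relative invariants entirely and argue via Theorem~\ref{thm_reductive}: regularity is equivalent to the open orbit being affine, and an affine homogeneous space $G/G_v$ with $G_v$ finite would force $G$ to be... but this route is less direct and ultimately relies on the same invariant-theoretic input, so I prefer the relative-invariant argument above for its brevity.
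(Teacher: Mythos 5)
Your closing argument (trivial character group forces any relative invariant to be absolute, hence constant by the prehomogeneity criterion) is exactly the paper's punchline, but the way you produce a non-constant relative invariant has a genuine gap: you obtain it from regularity via Lemma~\ref{lemma_regspecial}, and that lemma --- together with Theorem~\ref{thm_reductive} on which its proof rests --- is stated and proved only for \emph{reductive} $G$. Proposition~\ref{prop_trivchar} is asserted for an arbitrary algebraic group with $\Ch(G)=\{1\}$, and the paper immediately applies it to \emph{unipotent} groups (Corollary~\ref{cor_noetale_uni}), which are not reductive. For a non-reductive $G$ you have no licence to conclude that an \'etale module is regular (finiteness of the generic isotropy group only implies regularity through the reductive characterization), so your chain ``\'etale $\Rightarrow$ regular $\Rightarrow$ non-degenerate relative invariant'' breaks at the first arrow. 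As written, your proof establishes only the semisimple case, not the full statement.

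The paper avoids this by invoking Kimura's Proposition~2.20 instead: for any algebraic group with an \'etale module $(G,\rho,V)$ and $n=\dim V>0$, there is an explicit relative invariant of degree $n$ (essentially $x\mapsto\det\bigl(\d\rho(A_1)x,\ldots,\d\rho(A_n)x\bigr)$ for a basis $A_1,\ldots,A_n$ of $\frg$), which is non-constant precisely because it has positive degree; no regularity and no non-degeneracy are needed, only non-constancy. If you replace your appeal to Lemma~\ref{lemma_regspecial} by this construction, the rest of your argument (trivial character, absolute invariant, contradiction with prehomogeneity) goes through verbatim and in the required generality. Your remarks on the $\dim V=0$ edge case and on the direction of the implication ``absolute $\Rightarrow$ constant'' are fine and consistent with the paper's implicit assumption $n>0$.
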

\begin{proof}
Assume that $\varrho:G\to V$ is a linear \'etale representation.
Let $n=\dim G=\dim V>0$.
By Kimura \cite[Proposition 2.20]{kimura},
the prehomogeneous module $(G,\varrho,V)$ has a relative
invariant $f$ of degree $n$, so $f$ is not constant.
As $\Ch(G)=\{1\}$, the associated character $\chi$ of $f$ must be
$\chi=1$, which means that $f$ is an absolute invariant.
But this is a contradiction to the fact that prehomogeneous
modules do not admit non-constant absolute invariants.
\end{proof}

We conclude that unipotent groups and semisimple groups do not
admit linear \'etale  representations, since their respective groups
of rational characters are trivial.

\begin{cor}\label{cor_noetale_ss}
There is no rational linear \'etale representation for a semisimple algebraic
group.
\end{cor}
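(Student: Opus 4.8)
The statement to prove is Corollary \ref{cor_noetale_ss}: there is no rational linear \'etale representation for a semisimple algebraic group.

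The plan is to deduce this immediately from Proposition \ref{prop_trivchar}, which says that a group $G$ with trivial character group $\Ch(G)=\{1\}$ admits no rational linear \'etale representation. So the only thing to check is that a semisimple algebraic group $G$ has $\Ch(G)=\{1\}$. First I would recall that any rational homomorphism $\chi:G\to\CC^\times=\GL_1$ has image an abelian subgroup of $\GL_1$; since $G$ is semisimple it equals its own derived subgroup, $G=[G,G]$, and any homomorphism to an abelian group must kill $[G,G]$, hence $\chi$ is trivial. Equivalently, on the Lie algebra level, $\frg=[\frg,\frg]$ for semisimple $\frg$, so any linear functional $\d\chi:\frg\to\CC$ vanishes, forcing $\chi$ to be trivial on the identity component; and a connected semisimple group has no nontrivial characters, while for the general (possibly disconnected) case one notes the character group of the finite quotient $G/G^\circ$ could in principle be nontrivial, but the statement in Proposition \ref{prop_trivchar} and the surrounding discussion (``semisimple groups do not admit linear \'etale representations, since their respective groups of rational characters are trivial'') indicates we take ``semisimple algebraic group'' to be connected, so $\Ch(G)=\{1\}$.

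With $\Ch(G)=\{1\}$ established, Proposition \ref{prop_trivchar} applies verbatim and yields the conclusion. I do not expect any genuine obstacle here: the corollary is a one-line consequence of the proposition, and the only content is the elementary observation $[G,G]=G$ for semisimple $G$, which the excerpt already flags in the sentence immediately preceding the corollary. If one wanted to be careful about disconnectedness, one could instead invoke the relative invariant argument directly as in the proof of Proposition \ref{prop_trivchar}: an \'etale module of dimension $n=\dim G>0$ has a non-constant relative invariant $f$ of degree $n$ with some character $\chi$, but non-constancy of $f$ combined with $\chi=1$ contradicts the absence of non-constant absolute invariants for prehomogeneous modules — and $\chi=1$ is forced because $\chi$ factors through the abelianization, which is trivial (or finite) for semisimple $G$, and a character into $\CC^\times$ with finite image cannot make $f(gv)=\chi(g)f(v)$ hold with $f$ non-constant rational and $f$ an absolute invariant after passing to a finite cover.

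Thus the proof is simply: \emph{Semisimple algebraic groups have trivial character group since they coincide with their derived subgroup. By Proposition \ref{prop_trivchar}, they admit no rational linear \'etale representation.}
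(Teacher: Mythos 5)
Your proof is correct and is exactly the paper's argument: the corollary is stated as an immediate consequence of Proposition \ref{prop_trivchar} together with the triviality of the character group of a semisimple group (which follows from $G=[G,G]$). The extra care you take about disconnectedness is fine but not needed for the paper's intended (connected) setting.
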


\begin{cor}\label{cor_noetale_uni}
There is no rational linear \'etale representation for a uni\-potent algebraic
group.
\end{cor}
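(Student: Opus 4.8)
The plan is to deduce Corollary~\ref{cor_noetale_uni} directly from Proposition~\ref{prop_trivchar}, so the only thing to verify is that a (connected) unipotent algebraic group $G$ has trivial character group $\Ch(G)=\{1\}$. Once that is established, Proposition~\ref{prop_trivchar} applies verbatim and rules out any rational linear \'etale representation. (For the disconnected case one reduces to the connected component; but in the algebraic-group setting ``unipotent'' is normally taken to mean connected, so I would not belabor this.)

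First I would recall that a rational character $\chi:G\to\CC^\times=\GL_1$ is in particular a morphism of algebraic groups, and that the image $\chi(G)$ is then a closed subgroup of $\GL_1$. Since $G$ is unipotent, its image under any morphism of algebraic groups is again unipotent; but the only unipotent subgroup of the torus $\GL_1$ is the trivial group, because $\GL_1$ contains no nontrivial unipotent elements (equivalently, $\GL_1$ is a torus and tori have no nontrivial unipotent subgroups). Hence $\chi(G)=\{1\}$, i.e.\ $\chi=1$. This shows $\Ch(G)=\{1\}$.

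An alternative phrasing, if one prefers the Lie-algebra viewpoint: a connected unipotent group over $\CC$ has $G=[G,G]\cdot(\text{something})$ is not quite right, but more simply, $\Ch(G)$ is a finitely generated abelian group which is also (being the character group of a connected unipotent, hence $\CC$-isomorphic to affine space) torsion-free and divisible only in trivial ways; the cleanest route remains the image argument above. Either way, the point is entirely standard.

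The main (and essentially only) obstacle here is purely expository: making sure the cited Proposition~\ref{prop_trivchar} is stated for algebraic groups in exactly the generality needed, and that ``unipotent'' carries its usual meaning. There is no real mathematical difficulty; the substance was already done in Proposition~\ref{prop_trivchar}. So I would keep the proof to one or two lines, namely: \emph{a unipotent group admits no nontrivial rational character since its image in $\GL_1$ would be a unipotent subgroup of a torus, hence trivial; now apply Proposition~\ref{prop_trivchar}.}
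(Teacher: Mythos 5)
Your proof is correct and follows exactly the route the paper takes: the paper deduces the corollary from Proposition~\ref{prop_trivchar} by noting that unipotent groups have trivial rational character group, which is precisely your argument (you simply spell out the standard fact that the image of a unipotent group in the torus $\GL_1$ must be trivial, which the paper leaves implicit).
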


On the other hand, a unipotent algebraic group may admit an affine \'etale representation.
This is not the case for a semisimple algebraic group $S$. 
It is already known that $S$ does not admit an affine \'etale 
representation, because of the correspondence to LSA-structures on the semisimple Lie algebra $\frs$ of $S$. 
Indeed, a semisimple Lie algebra $\frs$ of characteristic zero does not admit an LSA-structure, because
$H^1(\frs,M)=0$ for all finite-dimensional $\frs$-modules $M$ by the first Whitehead
Lemma on Lie algebra cohomology.
A proof using this fact is given by Medina \cite{medina},
see also Burde \cite{burde2}.
However, the argument with the character group here gives an 
independent proof.
Proofs using the existence of a fixed point were given
by Helmstetter \cite{helmstetter} or Baues \cite{baues}.

\begin{remark}
The vanishing of the Lie algebra cohomology $H^n(\frg,\frg)$ for all $n\ge 0$ with the adjoint
module $\frg$ alone is not enough to ensure that $G$ does not admit an affine \'etale representation.
For example, the linear algebraic group $G={\rm Aff}(V)$ for a vector space $V$ has a cohomologically rigid 
Lie algebra $\frg=\mathfrak{aff}(V)$, which satisfies $H^n(\frg,\frg)=0$ for all $n\ge 0$ as a consequence of Carles \cite[Lemma 2.2]{carles}.
But the coadjoint representation of $G$ is \'etale
(cf.~Rais \cite{rais}).
\end{remark}


\section{\'Etale modules for groups with one or two simple factors}\label{sec_special_examples}

As stated in the introduction,
certain classification results for \'etale modules are immediately
obtained from the classification of prehomogeneous modules.
These classifications have been collected in a convenient
reference in \cite{table}.

%

\begin{remark}\index{scalar multiplication}
In Kimura et al.~\cite{kimuraS}, \cite{kimuraI}, the prehomogeneous modules are always stated with
one scalar multiplication $\mu$ acting on each irreducible component,
that is, $(\GL_1^k\times G,\rho_1\oplus\ldots\oplus\rho_k)$, and in this
case we do not explicitly state the scalar multiplications,
as it is understood that each $\rho_i$ stands for $\mu\otimes\rho_i$.
But in some cases, we do not need an independent scalar multiplication
on each component to achieve prehomogeneity.
Consider for example the prehomogeneous module Ks A-2, $(\GL_1^n\times\SL_n,\omega_1^{\oplus n})$.
For $\omega_1^{\oplus n}$ we need only the operation of $\SL_n$
and one scalar multiplication $\GL_1$ acting on all components to
obtain a prehomogeneous module,
that is $(\GL_1\times\SL_n, \mu\otimes\omega_1^{\oplus n})$.
\end{remark}

Finding the \'etale modules of type SK, Ks and KI is rather
easy, as the generic isotropy subgroup is known in each case.
Thus we can just pick the modules with $G_v^\circ\cong\{1\}$ from
the known classification tables.
Finding \'etale modules in the class KII is significantly more
complicated and will be done in a forthcoming article.

\begin{prop}\label{prop_SKspecial}
The following irreducible reduced prehomogeneous modules are all
\'etale modules in the list SK:
\begin{itemize}
\item SK I-4: $(\GL_2,3\omega_1,\Sym^3\CC^2)$.
\item SK I-8: $(\SL_3\times\GL_2,2\omega_1\otimes\omega_1,\Sym^2\CC^3\otimes\CC^2)$.
\item SK I-11: $(\SL_5\times\GL_4,\omega_2\otimes\omega_1,\bigwedge^2\CC^5\otimes\CC^4)$.
\end{itemize}
\end{prop}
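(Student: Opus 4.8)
The plan is to verify, for each of the three listed modules SK I-4, SK I-8, SK I-11, that it is in fact an \'etale module by checking the numerical equality $\dim G = \dim V$ together with prehomogeneity. Since all three modules appear in the Sato--Kimura classification \cite[\S 7]{SK}, their prehomogeneity is already established there, and moreover Sato and Kimura record the generic isotropy subgroup in each case. So the core of the argument is: (i) confirm that each module is listed as prehomogeneous in SK, (ii) read off the generic isotropy subgroup $G_v$ from that classification, and (iii) observe that $G_v^\circ$ is trivial, which by the dimension formula $\dim G_v = \dim G - \dim V$ (Section \ref{sec_PV}) is equivalent to $\dim G = \dim V$, i.e. to the module being \'etale.

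Concretely, I would carry out the dimension bookkeeping as a sanity check. For SK I-4, $G = \GL_2$ has $\dim G = 4$ and $V = \Sym^3\CC^2$ has $\dim V = 4$. For SK I-8, $G = \SL_3\times\GL_2$ has $\dim G = 8 + 4 = 12$, while $V = \Sym^2\CC^3\otimes\CC^2$ has $\dim V = 6\cdot 2 = 12$. For SK I-11, $G = \SL_5\times\GL_4$ has $\dim G = 24 + 16 = 40$, while $V = \bigwedge^2\CC^5\otimes\CC^4$ has $\dim V = 10\cdot 4 = 40$. In each case the dimensions match, so once prehomogeneity is granted the module is automatically \'etale; equivalently, the generic isotropy subgroups listed in \cite[\S 7]{SK} are finite in precisely these cases.

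The main (and essentially only) obstacle is not a mathematical difficulty but a matching problem: one must correctly locate each of SK I-4, SK I-8, SK I-11 in the Sato--Kimura tables, using the labelling convention fixed in Section \ref{sec_intro}, and confirm that the generic isotropy group recorded there has trivial identity component. Everything else is the routine dimension count above. I would therefore phrase the proof as: \emph{Each of these modules is prehomogeneous by \cite[\S 7]{SK}, and in each case the dimension count $\dim G = \dim V$ given above shows $\dim G_v = 0$, so $G_v$ is finite and the module is \'etale.}
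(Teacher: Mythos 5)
Your proposal is correct and matches the paper's own (implicit) argument: the paper simply notes that the generic isotropy subgroups are recorded in the Sato--Kimura tables, so one picks out exactly the entries with $G_v^\circ\cong\{1\}$, which is equivalent to your dimension count $\dim G=\dim V$ via $\dim G_v=\dim G-\dim V$. The only thing to keep in mind is that the proposition is a completeness claim (``are all the \'etale modules in the list''), so the table inspection must cover every entry of \cite[\S 7]{SK}, not just the three listed ones --- which your phrase ``in precisely these cases'' acknowledges.
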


\begin{prop}\label{prop_Ksspecial}
The following non-irreducible one-simple prehomogeneous modules are
all \'etale modules in the list Ks:
\begin{itemize}
\item Ks A-1 for $n=2$: This is equivalent to Ks A-4 with $n=2$.
\item Ks A-2: $(\GL_1\times\SL_n,\mu\otimes\omega_1^{\oplus n},(\CC^n)^{\oplus n})$.
\item Ks A-3: $(\GL_1^{n+1}\times\SL_n,\omega_1^{\oplus n+1},(\CC^n)^{\oplus n+1})$.
\item Ks A-4: $(\GL_1^{n+1}\times\SL_n,\omega_1^{\oplus n}\oplus\omega_1^*,(\CC^n)^{\oplus n}\oplus\CC^{n*})$.
\item Ks A-11 for $n=2$: $(\GL_1^2\times\SL_2,2\omega_1\oplus\omega_1,\Sym^2\CC^2\otimes\CC^2)$.
\item Ks A-12 for $n=2$: Equivalent to Ks A-11 with $n=2$.
\item Ks A-20 for $n=1$: Equivalent to Ks A-2 with $n=2$.
\end{itemize}
\end{prop}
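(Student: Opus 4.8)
The plan is to check each entry against the defining property of an \'etale module: a prehomogeneous module $(G,\rho,V)$ is \'etale precisely when $\dim G=\dim V$, equivalently when its generic isotropy subgroup has trivial identity component. With a single exception discussed below, prehomogeneity of every module in the list (in the reduced scalar configuration stated) is part of Kimura's classification \cite{kimuraS}, so the work reduces to dimension counts, together with the handling of the three entries phrased via an equivalence. Since an equivalence of modules in the sense of Section~\ref{sec_PV} carries isomorphic groups and isomorphic vector spaces to isomorphic ones, it preserves \'etaleness; hence Ks A-1 ($n=2$), Ks A-12 ($n=2$) and Ks A-20 ($n=1$) follow once Ks A-4 ($n=2$), Ks A-11 ($n=2$) and Ks A-2 ($n=2$) are settled, and I would exhibit each of these small-parameter equivalences explicitly.

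For the modules stated directly I would simply count dimensions: for Ks A-3, $\dim(\GL_1^{n+1}\times\SL_n)=(n+1)+(n^2-1)=n^2+n=\dim\big((\CC^n)^{\oplus n+1}\big)$; for Ks A-4, $\dim G=n^2+n=n\cdot n+n=\dim\big((\CC^n)^{\oplus n}\oplus\CC^{n*}\big)$; and for Ks A-11 with $n=2$, $\dim(\GL_1^2\times\SL_2)=5=3+2$ is the dimension of $\Sym^2\CC^2\oplus\CC^2$. Combined with prehomogeneity, each of these is \'etale. Alternatively, the count for Ks A-4 can be bypassed: its common submodule with Ks A-3, namely $(\GL_1^{n+1}\times\SL_n,\omega_1^{\oplus n})$ with the last scalar acting trivially, is prehomogeneous with generic isotropy subgroup a torus, hence reductive, so Corollary~\ref{cor_redequ} shows that Ks A-4 is \'etale if and only if Ks A-3 is.

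The exceptional entry is Ks A-2. In \cite{kimuraS} it is recorded as $(\GL_1^n\times\SL_n,\omega_1^{\oplus n})$, for which $\dim G=n^2+n-1>n^2=\dim V$ when $n\ge 2$, so it is \emph{not} \'etale in that form; one must pass to the reduced form $(\GL_1\times\SL_n,\mu\otimes\omega_1^{\oplus n})$ with a single scalar, as in the Remark preceding the proposition. I would verify prehomogeneity of this reduced form directly: identifying $V=(\CC^n)^{\oplus n}$ with $\Mat_n(\CC)$, the action is $(\lambda,g)\cdot M=\lambda\,gM$, so the orbit of $I_n$ is $\{h\in\Mat_n(\CC)\mid \det h=\lambda^n\text{ for some }\lambda\in\CC^\times\}=\GL_n(\CC)$ (every nonzero complex number being an $n$th power), which is Zariski-dense; the stabilizer of $I_n$ is the finite group of $n$th roots of unity, so the reduced module is \'etale.

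The main obstacle here is organizational rather than mathematical. One has to pin down, for each Ks entry, the reduced scalar configuration whose group dimension equals $\dim V$, since Kimura's tables habitually carry one more $\GL_1$ factor than prehomogeneity requires and \'etaleness surfaces only after this trimming; one has to check the three small-parameter equivalences Ks A-1$\sim$Ks A-4, Ks A-12$\sim$Ks A-11 and Ks A-20$\sim$Ks A-2; and, to justify the word \emph{all}, one has to scan the rest of list Ks and discard every module whose recorded generic isotropy subgroup has positive-dimensional identity component. Because \cite{kimuraS} lists these isotropy groups explicitly, that final step is routine.
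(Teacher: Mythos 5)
Your proposal is correct and follows essentially the same route as the paper, which gives no separate proof beyond the preceding remark that the generic isotropy subgroups in Kimura's list Ks are known explicitly, so one simply selects the entries with $G_v^\circ\cong\{1\}$ (equivalently, by prehomogeneity, those with $\dim G=\dim V$). Your extra care with the reduced scalar configuration for Ks A-2 and the explicit orbit computation there match what the paper records in the Remark before the proposition and in Example \ref{ex_nonreg}.
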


\begin{cor}\label{cor_onesimple_etale}
If $(\GL_1^k\times S,\rho,V)$ for $k\geq 1$ and a simple group $S$
is an \'etale module, then $S=\SL_n$ for some $n\geq 1$.
\end{cor}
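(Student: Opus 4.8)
The plan is to derive Corollary \ref{cor_onesimple_etale} as a consequence of the results already assembled in the excerpt, combining the structural description of irreducible prehomogeneous modules (Theorem \ref{mthm:castling}) with Baues's theorem on submodules of \'etale modules and the decomposition of reductive \'etale modules via Proposition \ref{prop_iso}. First I would observe that it suffices to treat the case $k=1$, i.e. $G=\GL_1\times S$: indeed, if $(\GL_1^k\times S,\rho,V)$ is \'etale, then by repeated application of Proposition \ref{prop_iso} (peeling off one irreducible summand at a time) we are led to a prehomogeneous module whose acting group involves $S$ together with at most one $\GL_1$ factor and a residual isotropy group; more directly, one notes that the semisimple factor $S$ of $G$ is unchanged under all the operations involved, so the constraint on $S$ is already forced by the $k=1$ case after absorbing the extra scalars. (Corollary \ref{cor_redequ} lets us freely dualize summands as needed in this reduction.)

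Next I would analyze the case $G=\GL_1\times S$ with $S$ simple. Let $(G,\rho,V)$ be \'etale and write $V=V_1\oplus\cdots\oplus V_r$ as a direct sum of irreducible $G$-submodules. If $r=1$, then $\rho$ is irreducible; Theorem \ref{mthm:castling} then expresses $(G,\rho,V)$ in the form $(L\times\SL_{m_1}\times\cdots\times\SL_{m_k},\sigma\otimes\omega_1\otimes\cdots\otimes\omega_1,V^n\otimes\CC^{m_1}\otimes\cdots\otimes\CC^{m_k})$ with $L$ having one simple factor. Matching semisimple factors, the simple factor of $S$ must be the simple factor of $L$ or one of the $\SL_{m_i}$; but $G$ has only one simple factor, so in fact $k=0$ and $S$ is the simple factor of $L$, or $L$ is a torus and $S=\SL_{m_1}$ — and a short case check (using that $\GL_1\times S$ is \'etale, so $\dim V=\dim S+1$) rules out the former unless $S=\SL_n$ as well, since the classification in Sato--Kimura \cite[\S 7]{SK} only produces irreducible reduced \'etale modules for $S$ of type $A$ (cf. Proposition \ref{prop_SKspecial}). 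For $r\geq 2$, each $V_i$ is a proper submodule, so by Baues's theorem each $(G,\rho|_{V_i},V_i)$ is a non-regular prehomogeneous module; I would then invoke Theorem \ref{mthm:castling} applied to each irreducible $V_i$ together with the non-regularity constraint to conclude that each $V_i$ is of the form $\mu\otimes(\text{rep of }S)$ with $S$ forced to be $\SL_n$ — here the key input is that a non-regular irreducible prehomogeneous module for $\GL_1\times S$ must have the simple factor $S$ of type $A_n$, which one reads off from the list of reduced irreducible modules and their castling transforms using the gcd constraints of Lemma \ref{lem:gcd}.

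I expect the main obstacle to be the bookkeeping in the $r\geq 2$ case: one has to be careful that Baues's theorem is stated for $G=\GL_1\times S$ with $S$ \emph{semisimple} and applies to the submodule as a $G$-module, not as a module for a smaller group, and that the ``non-regular'' conclusion genuinely forces type $A$ for the simple factor. The cleanest route is probably to quote the forthcoming companion results (or the explicit tables in \cite{table}) for the list of all reduced irreducible prehomogeneous modules for one-simple reductive groups that are non-regular, and check directly that in every such module the simple factor is some $\SL_n$; then castling-equivalence preserves both the non-regularity and the simple factor, so the same holds for $V_i$. Combined with the $r=1$ analysis above, this yields that in all cases $S=\SL_n$, proving Corollary \ref{cor_onesimple_etale}.
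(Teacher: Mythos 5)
Your proposal has two genuine gaps. First, the reduction from $k\geq 2$ to $k=1$ does not work as described. Baues's theorem (Theorem \ref{mthm_nonreg}) is only valid for groups with one-dimensional center --- Example \ref{ex_nonreg_center_dim2} exhibits an \'etale module for a group with two-dimensional center having a \emph{regular} irreducible component --- and Proposition \ref{prop_iso} replaces $G$ by the connected generic isotropy group $H$ of the summand you peel off, which need not contain $S$ nor have the form $\GL_1^{k'}\times S$; so the claim that ``the semisimple factor $S$ is unchanged under all the operations involved'' is unjustified. Note that several of the actual \'etale modules in Proposition \ref{prop_Ksspecial} (Ks A-3, Ks A-4) have $k=n+1\geq 2$, so the case $k\geq 2$ cannot simply be absorbed into the case $k=1$.

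Second, and more seriously, the key input you rely on in the reducible case --- that a non-regular irreducible prehomogeneous module for a one-simple reductive group must have simple factor of type $A$ --- is false. The paper's own list in Section \ref{sec_etale} contains counterexamples: SK III-5 with $2m+1=1$ gives the non-regular irreducible module $(\GL_1\times\Sp_n,\mu\otimes\omega_1,\CC^{2n})$, and SK III-6 is $(\GL_1\times\Spin_{10},\mu\otimes\mathrm{halfspinrep},\CC^{16})$; in neither case is the simple factor of type $A$. Non-regularity of the summands therefore does not force $S=\SL_n$; what is really needed is that no collection of such summands assembles into an \emph{\'etale} module unless $S$ is of type $A$, and that is exactly what Kimura's classification \cite{kimuraS} of all prehomogeneous modules $(\GL_1^k\times S,\rho_1\oplus\cdots\oplus\rho_k)$ for simple $S$, together with their generic isotropy groups, provides. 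The paper's proof of the corollary is simply to read off from that classification the modules with finite generic isotropy --- these are listed in Proposition \ref{prop_Ksspecial} --- and observe that every one of them has $S=\SL_n$. Your closing remark about quoting the explicit tables is in fact the paper's actual argument; the castling and non-regularity machinery you build is not needed here and, as it stands, does not close the proof.
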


\begin{prop}\label{prop_KIspecial}
The following two-simple prehomogeneous modules of type I are
all \'etale modules in the list KI:
\begin{itemize}
\item KI I-1: $(\GL_1^2\times\SL_4\times\SL_2,(\omega_2\otimes\omega_1)\oplus(\omega_1\otimes\omega_1),(\bigwedge^2\CC^4\otimes\CC^2)\oplus(\CC^4\otimes\CC^2))$.
\item KI I-2: $(\GL_1^2\times\SL_4\times\SL_2,(\omega_2\otimes\omega_1)\oplus(\omega_1\otimes1)\oplus(\omega_1\otimes1),(\bigwedge^2\CC^4\otimes\CC^2)\oplus\CC^4\oplus\CC^4)$.
\item KI I-6: $(\GL_1^3\times\SL_5\times\SL_2,(\omega_2\otimes\omega_1)\oplus(\omega_1^*\otimes 1)\oplus(\omega_1^{(*)}\otimes 1),(\bigwedge^2\CC^5\otimes\CC^2)\oplus\CC^{5*}\oplus\CC^{5(*)})$.
\item KI I-16: $(\GL_1^2\times\Sp_2\times\SL_3,(\omega_1\otimes\omega_1)\oplus(\omega_2\otimes 1)\oplus(1\otimes\omega_1^*),(\CC^4\otimes\CC^3)\oplus V^5\oplus\CC^3)$.
\item KI I-18: $(\GL_1^3\times\Sp_2\times\SL_2,(\omega_2\otimes\omega_1)\oplus(\omega_1\otimes 1)\oplus(1\otimes\omega_1),(V^5\otimes\CC^2)\oplus \CC^4\oplus\CC^2)$.
\item KI I-19: $(\GL_1^3\times\Sp_2\times\SL_4,(\omega_2\otimes\omega_1)\oplus(\omega_1\otimes 1)\oplus(1\otimes\omega_1^*),(V^5\otimes\CC^4)\oplus \CC^4\oplus\CC^4)$.
\end{itemize}
\end{prop}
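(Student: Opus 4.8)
The plan is to read the answer off Kimura's classification tables. As recalled in Section~\ref{sec_PV}, prehomogeneity of $(G,\rho,V)$ is equivalent to $\dim G_v=\dim G-\dim V$, so a prehomogeneous module is \'etale precisely when its generic isotropy subgroup $G_v$ is finite, equivalently when $\dim G=\dim V$. Every one of the six modules listed in the proposition occurs in the classification \cite{kimuraI} of prehomogeneous modules for two-simple groups of type I, hence is prehomogeneous; moreover Kimura records the generic isotropy subgroup for each entry of that list. It therefore suffices to inspect the entries KI~I-1, I-2, I-6, I-16, I-18 and I-19 in \cite{kimuraI} and to observe that in each of them the identity component $G_v^\circ$ is trivial.

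To keep the statement independent of a correct transcription of the isotropy data, I would also confirm $\dim G=\dim V$ by a direct count in each case, using the convention of the Remark preceding the proposition about the implicit scalar multiplications and recalling that $\Sp_2$ denotes the rank-two symplectic group, of dimension $10$, whose fundamental representation $\omega_2$ is the $5$-dimensional module $V^5$ (the nontrivial constituent of $\bigwedge^2\CC^4$). For instance, for KI~I-1, $\dim G=1+1+15+3=20=(6\cdot 2)+(4\cdot 2)=\dim V$; for KI~I-6, $\dim G=1+1+1+24+3=30=(10\cdot 2)+5+5=\dim V$; for KI~I-16, $\dim G=1+1+10+8=20=(4\cdot 3)+5+3=\dim V$; for KI~I-18, $\dim G=1+1+1+10+3=16=(5\cdot 2)+4+2=\dim V$; for KI~I-19, $\dim G=1+1+1+10+15=28=(5\cdot 4)+4+4=\dim V$; and KI~I-2 is settled by the same computation. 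In each case the two dimensions coincide, so a prehomogeneous module of this size is automatically \'etale.

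The single delicate point is bookkeeping rather than mathematics: one must transcribe the tables of \cite{kimuraI} correctly, apply the scalar-multiplication convention consistently (in some of these cases, such as KI~I-2 and KI~I-16, there are fewer $\GL_1$ factors than irreducible summands, so one must track which scalar acts on which component), and use the correct dimension for each exceptional-looking constituent. There is no conceptual obstacle. A fully self-contained alternative would be to exhibit a generic point in each module and compute its stabilizer directly, but this is unnecessary because \cite{kimuraI} already supplies the generic isotropy subgroups.
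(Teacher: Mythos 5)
Your proposal takes exactly the paper's route: the paper gives no separate proof, relying on the remark preceding the proposition that the generic isotropy subgroups are recorded in the classification of Kimura et al., so one simply selects the entries with $G_v^\circ\cong\{1\}$, equivalently $\dim G=\dim V$, and your dimension counts are all correct. The only point to keep in view is that the word ``all'' in the statement also requires scanning the remaining entries of the KI table to rule them out, but that is the same table inspection the paper implicitly performs.
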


\section[\'Etale modules for one-dimensional center]{\'Etale modules for groups with one-dimensional center}\label{sec_etale}

In this section we study \'etale modules for algebraic
groups $G=\GL_1\times S$, where $S$ is semisimple.

\subsection{Non-regularity of submodules}\label{subsec_special_torus1}


An important property of \'etale modules for groups with
one-dimensional center is given by the following theorem
due to Baues \cite[Lemma 3.7]{baues}.

\begin{thm}[Baues]\label{mthm_nonreg}
Let $G=\GL_1\times S$ with $S$ semisimple and let $(G,\rho,V)$ be
an \'etale module.
Suppose $(G,\rho,W)$ is a proper submodule of $(G,\rho,V)$.
Then $(G,\rho,W)$ is a non-regular prehomogeneous module.
\end{thm}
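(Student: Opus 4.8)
The plan is to exploit the key numerical feature of an \'etale module for $G=\GL_1\times S$: since $\dim G=\dim V$ and $\dim\mathrm{Z}(\frg)=1$, the semisimple part $\frs$ has dimension exactly $\dim V-1$, and the generic isotropy subgroup $G_v$ is finite. The strategy is to argue by contradiction. Suppose $(G,\rho,W)$ is a proper submodule of $(G,\rho,V)$ that happens to be regular prehomogeneous. Write $V=W\oplus U$ as $G$-modules (by reductivity), with $\rho=\rho|_W\oplus\rho|_U$, where $U$ is a nonzero $G$-module. First I would record that since $(G,\rho,V)$ is \'etale, by Proposition~\ref{cor_iso} applied to the decomposition $\rho=\rho|_W\oplus\rho|_U$, the module $(G,\rho|_W,W)$ is prehomogeneous and $(H,\rho|_U,U)$ is an \'etale module, where $H$ is the connected component of the generic isotropy subgroup $G_w$ of $(G,\rho|_W,W)$.

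Next I would bring in the regularity hypothesis on $W$. By Theorem~\ref{thm_reductive}, $(G,\rho|_W,W)$ being regular prehomogeneous is equivalent to its generic isotropy subgroup $G_w$ being reductive; hence its identity component $H$ is a connected reductive algebraic group. Now $(H,\rho|_U,U)$ is an \'etale module by the previous paragraph, so Lemma~\ref{lemma_regspecial} applies and tells us it is regular prehomogeneous — but more importantly, $H$ is a \emph{reductive} group admitting a linear \'etale representation on the nonzero space $U$. The crux is then to pin down the character group of $H$. Since $G=\GL_1\times S$ has a one-dimensional central torus, the isotropy subgroup $G_w$ sits inside $G$, and the point is that $H$ must have trivial character group: the only source of characters of $G$ is the $\GL_1$ factor, and I would argue that on the open orbit the $\GL_1$-action is already ``used up'' producing the relative invariant of $(G,\rho|_W,W)$, so that the central torus of $G$ does not survive (even up to finite index) into the isotropy $H$. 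Concretely, the relative invariant $f_W$ of $(G,\rho|_W,W)$ — which exists and is nonconstant because $\dim W>0$ and $G$ has nontrivial characters — has an associated character $\chi$ that is nontrivial on the $\GL_1$ factor (otherwise $f_W$ would be an absolute invariant of a prehomogeneous module, impossible); since $G_w$ stabilizes $w$, it lies in $\ker\chi$, and because $\chi$ is nontrivial on the central $\GL_1$, the subgroup $\ker\chi\cap(\GL_1\times 1)$ is finite. Then $H=G_w^\circ$ is a connected subgroup of $G$ whose image in the $\GL_1$ factor is trivial, hence $H\subseteq\{1\}\times S$, so $H$ is a connected subgroup of a semisimple group; I would then want to conclude $H$ is itself semisimple, whence $\Ch(H)=\{1\}$.

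With $\Ch(H)=\{1\}$ established, Proposition~\ref{prop_trivchar} immediately yields that $H$ admits no linear \'etale representation, contradicting the fact that $(H,\rho|_U,U)$ is an \'etale module with $U\neq 0$. Therefore no proper submodule $W$ can be regular, i.e.\ $(G,\rho,W)$ must be non-regular. I would expect the main obstacle to be the step showing that $H=G_w^\circ$ has trivial character group — more precisely, controlling how the central $\GL_1$ of $G$ interacts with the isotropy subgroup and ruling out a residual central torus in $H$. The cleanest route is via the relative invariant and its character as sketched: one needs that $\chi$ restricts nontrivially to the central $\GL_1$, which follows because a prehomogeneous module has no nonconstant absolute invariant and $\dim W\geq 1$ forces $f_W$ nonconstant; and one needs that a connected reductive subgroup of the semisimple group $\{1\}\times S$ with a linear \'etale representation cannot have a central torus, which again reduces to Proposition~\ref{prop_trivchar} once we know such a torus would have to act trivially or would contradict finiteness of the isotropy in the ambient \'etale module $V$. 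A careful bookkeeping of these reductions, rather than any deep new idea, is what the proof requires.
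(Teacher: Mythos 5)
The paper does not actually reprove this statement; it is quoted from Baues \cite[Lemma 3.7]{baues}. Judged on its own, your argument is correct up to and including the conclusion that $H=G_w^\circ$ is a connected \emph{reductive} subgroup of $\{1\}\times S$ (the chain Proposition~\ref{cor_iso} $\Rightarrow$ $(H,\rho|_U,U)$ \'etale, Theorem~\ref{thm_reductive} $\Rightarrow$ $G_w$ reductive, and $G_w\subseteq\ker\chi_W$ with $\chi_W$ nontrivial on the central $\GL_1$ $\Rightarrow$ $H\subseteq\{1\}\times S$ are all fine, granting that the nonconstant relative invariant $f_W$ exists \emph{because} $W$ is assumed regular, not merely because $\dim W>0$). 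The genuine gap is exactly the step you flag: a connected reductive subgroup of a semisimple group need not be semisimple, so you cannot conclude $\Ch(H)=\{1\}$ and Proposition~\ref{prop_trivchar} does not apply. This is not a removable technicality: for the regular module $(\GL_1\times\SL_2,\mu\otimes 2\omega_1,\Sym^2\CC^2)$ one has $H\cong\SO_2\cong\GL_1\subset\{1\}\times\SL_2$, so $\Ch(H)\cong\ZZ$ and $H$ does admit \'etale modules (e.g.\ $\CC$ with a nontrivial character). To exclude this you would have to use that the $H$-module $U$ is the restriction of a $G$-module — in that example every one-dimensional $G$-module restricts trivially to $H$ — and your sketch (``the torus would have to act trivially or contradict finiteness of the isotropy'') does not supply such an argument; I do not see how to complete it along these lines without importing a different idea.

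The argument that actually works runs through relative invariants rather than isotropy groups. Since $(G,\rho,V)$ is \'etale, its generic isotropy is finite, hence $(G,\rho,V)$ is regular by Lemma~\ref{lemma_regspecial}. Because $\Ch(G)=\Ch(\GL_1\times S)\cong\ZZ$ and relative invariants with equal characters are proportional, the regular module $(G,\rho,V)$ has a single irreducible relative invariant $f_1$, and every relative invariant is a power of $f_1$ up to scalar. If $(G,\rho,W)$ were regular, its nondegenerate relative invariant $f_W$, pulled back along the projection $p_W:V\to W$, would be a relative invariant of $(G,\rho,V)$, hence $f_W\circ p_W=c\,f_1^m$; this forces $f_1$ to depend only on the $W$-coordinates, so $\grad\log f_1$ takes values in $W^*\oplus 0\subsetneq V^*$ and no relative invariant of $(G,\rho,V)$ is nondegenerate — contradicting regularity of $V$. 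This is where the hypothesis $\dim\mathrm{Z}(G)=1$ genuinely enters, and it is the step your approach is missing a substitute for.
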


In the terminology of Rubenthaler \cite{rubenthaler},
this theorem states that $(G,\rho,V)$ is \emph{quasi-irreducible}.

%
%
%
%

A simple example illustrates the statement of Theorem
\ref{mthm_nonreg}:

\begin{example}\label{ex_nonreg}
The module Ks A-2, $(\GL_1\times\SL_n,\mu\otimes\omega_1^{\oplus n},(\CC^n)^{\oplus n})$ is an \'etale module.
We identify $(\CC^n)^{\oplus n}=\mat_n$, and then a relative
invariant is given by the determinant of $n\times n$-matrices.
This module decomposes into $n$ irreducible and non-regular
summands of type SK III-2, $(\GL_1\times\SL_n,\mu\otimes\omega_1,\CC^n)$ corresponding to action by matrix-vector
multiplication on each column of matrices in $\mat_n$.
\end{example}

Theorem \ref{mthm_nonreg} does not hold if the center of $G$ has
dimension $\geq 2$:

\begin{example}\label{ex_nonreg_center_dim2}
Consider the module KI I-2,
\[
(\GL_1^2\times\SL_4\times\SL_2,(\omega_2\otimes\omega_1)\oplus(\omega_1\otimes1)\oplus(\omega_1\otimes1),({\bigwedge}^2\CC^4\otimes\CC^2)\oplus\CC^4\oplus\CC^4).
\]
The first irreducible component of this module, $\omega_2\otimes\omega_1$, corresponds to the regular irreducible module
SK I-15 with parameters $n=6$, $m=2$ (recall that over the complex
numbers, $\SO_6$ and $\SL_4$ are locally isomorphic).
\end{example}



\subsection{Groups with copies of one simple factor only}

We now want to study reductive groups of the form
\begin{equation}
G=\GL_1\times S\times\overset{k}{\cdots}\times S,
\label{eq:center1S}
\end{equation}
where $S$ is simple and $k\geq 2$
(for $k=1$ see Section \ref{sec_special_examples}).
If there are any irreducible \'etale modules for such a group,
each of them must be castling-equivalent to 
one of the modules in Proposition \ref{prop_SKspecial},
\begin{itemize}
\item
SK I-4: $(\GL_2\times\SL_1, 3\omega_1\otimes\omega_1, \Sym^3\CC^2)$.
\item
SK I-8: $(\SL_3\times\GL_2, 2\omega_1\otimes\omega_1, \Sym^2\CC^3\otimes\CC^2)$.
\item
SK I-11: $(\SL_5\times\GL_4, \omega_2\otimes\omega_1,\bigwedge^2\CC^5\otimes\CC^4)$.
\end{itemize}

\begin{remark}\label{rem:only_SL_irred}
As castling only adds additional factors $\SL_m$, this list shows
that no irreducible \'etale representation for \eqref{eq:center1S}
with $S\neq\SL_m$ can exist.
\end{remark}

By Theorem \ref{mthm_nonreg}, any reducible \'etale module
decomposes into irreducible components, each of which is a
non-regular prehomogeneous module for $G$.
Therefore, by the Sato-Kimura classification
\cite[III on p.~147]{SK} (or \cite[Section 1]{table}), each irreducible
component is castling-equivalent to one of the following:
\begin{itemize}
\item
SK III-1: $(L\times\GL_m,\rho\otimes\omega_1, V^n\otimes\CC^m)$,
where $\rho:L\to\GL(V^n)$ is an $n$-dimensional irreducible
representation of a semisimple algebraic group $L$ $(\neq\SL_n)$
with $m>n\geq 3$.
\item
SK III-2:
$(\SL_n\times\GL_m,\omega_1\otimes\omega_1, \CC^n\otimes\CC^m)$
for $\frac{1}{2}m\geq n\geq 1$.
\item
SK III-3:
$(\GL_{2n+1},\omega_2,\bigwedge^2\CC^{2n+1})$ for $n\geq 2$.
\item
SK III-4:
$(\GL_2\times\SL_{2n+1},\omega_1\otimes\omega_2,
\CC^2\otimes\bigwedge^2\CC^{2n+1})$ for $n\geq 2$.
\item
SK III-5:
$(\Sp_n\times\GL_{2m+1},\omega_1\otimes\omega_1,
\CC^{2n}\otimes\CC^{2m+1})$ for $n>2m+1\geq 1$.
\item
SK III-6:
$(\GL_1\times\Spin_{10},\mu\otimes\mathrm{halfspinrep},\CC\otimes V^{16})$.
\end{itemize}
Now it is obvious that any castling transform of one of these
modules will have a group which has at least one factor
$\SL_m$ with $m\geq 2$. So among these there is not even a
prehomogeneous module for a group \eqref{eq:center1S} with
$S\neq\SL_m$.
Combined with Remark \ref{rem:only_SL_irred}, we have:

\begin{cor}\label{cor:S_not_SLn}
Let $G=\GL_1\times S\times\cdots\times S$, where $S$ is a simple algebraic
group other than $\SL_m$ for any $m\geq 2$.
Then there exist no \'etale modules for $G$.
\end{cor}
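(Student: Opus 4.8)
The plan is to settle irreducible and reducible \'etale modules for $G=\GL_1\times S\times\cdots\times S$ (with $k$ copies of $S$) separately; the case $k=1$ is Corollary~\ref{cor_onesimple_etale}, so assume $k\geq 2$ and, for contradiction, that $(G,\rho,V)$ is an \'etale module. Recall $S$ is simple with $S\not\cong\SL_m$ for every $m\geq 2$, i.e.\ $S$ is \emph{not of type $A$}.

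If $(G,\rho,V)$ is irreducible, this is Remark~\ref{rem:only_SL_irred}: by Lemma~\ref{lemma_regspecial} the module is regular, so --- being irreducible over a reductive group --- it is castling-equivalent to a reduced Sato--Kimura module; since castling preserves the \'etale property and, by Proposition~\ref{prop_SKspecial}, the reduced irreducible \'etale modules in the list SK are exactly SK~I-4, SK~I-8, SK~I-11, our module is castling-equivalent to one of these three. The groups $\GL_2$, $\SL_3\times\GL_2$, $\SL_5\times\GL_4$ have semisimple part a product of special linear groups, and a castling transform only modifies or adjoins special linear factors; hence the semisimple part $S\times\cdots\times S$ of $G$ would be a product of special linear groups, forcing $S$ to be of type $A$ --- a contradiction.

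So suppose $(G,\rho,V)=W_1\oplus\cdots\oplus W_r$ is reducible, $r\geq 2$, each $W_i$ irreducible. By Baues (Theorem~\ref{mthm_nonreg}) every proper submodule of $V$ --- in particular each $W_i$ and each partial sum $\bigoplus_{i\in I}W_i$ with $\emptyset\neq I\subsetneq\{1,\dots,r\}$ --- is a non-regular prehomogeneous $G$-module. Let $\bar G_i$ be the group acting effectively on $W_i$; being a reductive quotient of $G$, its semisimple part is a product of (isogeny forms of) copies of $S$, so it has no simple factor of type $A$. On the other hand $(\bar G_i,W_i)$ is an irreducible non-regular reductive prehomogeneous module, hence castling-equivalent to one of SK~III-1,\dots,III-6. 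Inspecting that list, every module in the castling class of SK~III-1,\dots,III-4 --- and of SK~III-5 with second factor $\GL_{2m+1}$, $2m+1\geq 3$ --- carries a simple factor $\SL_\ell$ with $\ell\geq 2$, so no $W_i$ can be of these types; the only entries whose castling classes may avoid type $A$ are SK~III-6 (semisimple part $\Spin_{10}$, type $D_5$) and the degenerate case $(\Sp_n\times\GL_1,\mu\otimes\omega_1,\CC^{2n})$ of SK~III-5 (semisimple part $\Sp_n$, type $C_n$, $n\geq 2$), and even there only the reduced module itself does. Hence, if $S$ is neither $\Spin_{10}$ nor of type $C_n$ for any $n\geq 2$, there is no non-regular irreducible prehomogeneous $G$-module at all, contradicting that $W_1$ is one; and if $S=\Spin_{10}$ or $S=\Sp_n$ ($n\geq 2$), each $W_i$ is forced to be the reduced module, i.e.\ a copy of $\CC^d$ --- $d=16$, resp.\ $d=2n$ --- on which exactly one copy of $S$ acts, possibly together with the central $\GL_1$.

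The last two cases are where I expect the real work to be. Partition the $W_i$ by which copy of $S$ acts on them and write $V=V_1\oplus\cdots\oplus V_k$ accordingly; faithfulness of $\rho$ makes each $V_j$ nonempty, and as $k\geq 2$ each $V_j$ is a proper submodule, hence prehomogeneous, with effective group a quotient of $\GL_1\times S$, so $\dim V_j\leq 1+\dim S$. But $\dim V_j$ is a positive multiple of $d$ while $d\nmid 1+\dim S$ (indeed $16\nmid 46$ and $2n\nmid 2n^2+n+1$ for $n\geq 2$), so in fact $\dim V_j\leq\dim S$; summing, $\dim V\leq k\dim S<1+k\dim S=\dim G$, contradicting that $V$ is \'etale. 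Together with the irreducible case this shows $G$ has no \'etale module. The crux, as signalled, is precisely these two degenerate Sato--Kimura entries, whose reduced groups $\GL_1\times\Spin_{10}$ and $\Sp_n\times\GL_1$ have no factor $\SL_\ell$ ($\ell\geq 2$): the quick ``every castling transform acquires such a factor'' that kills all the other types does not apply to them, and one has to fall back on the dimension count above.
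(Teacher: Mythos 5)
Your proof is correct, and its skeleton is the same as the paper's: dispose of the irreducible case via castling-equivalence to SK~I-4, I-8, I-11 (whose groups are built only from special linear factors), then use Baues' theorem to force every irreducible component of a reducible \'etale module to be non-regular, hence castling-equivalent to one of SK~III-1 through III-6, and argue that castling only produces type-$A$ factors. Where you genuinely go beyond the paper is in the last step. The paper dismisses the reducible case with the single assertion that ``any castling transform of one of these modules will have a group which has at least one factor $\SL_m$ with $m\geq 2$''; as you correctly point out, this does not cover the two reduced modules whose groups contain no such factor, namely SK~III-6 (group $\GL_1\times\Spin_{10}$) and the degenerate instance of SK~III-5 with $2m+1=1$ (group $\Sp_n\times\GL_1$, $n\geq 2$). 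Each of these is a perfectly good non-regular irreducible prehomogeneous module for a group of the form $\GL_1\times S\times\cdots\times S$ with $S\neq\SL_m$, so some additional argument is required, and your dimension-plus-divisibility count supplies it cleanly: grouping the components by the copy of $S$ that acts, each block is a proper (hence prehomogeneous) submodule of dimension a positive multiple of $d=16$ resp.\ $d=2n$ bounded by $1+\dim S$, and since $d\nmid 1+\dim S$ one gets $\dim V\leq k\dim S<\dim G$. This extra step appears to be genuinely needed to make the corollary's proof complete, so your version is not just correct but more careful than the one in the paper. Two cosmetic remarks: in the irreducible case the appeal to regularity is unnecessary (castling-equivalence to a reduced SK module and preservation of the \'etale property already suffice), and in the partition step you could note explicitly that no component can be acted on by $\GL_1$ alone, since $(\GL_1,\mu,\CC)$ is regular and thus excluded by Baues' theorem -- but neither point affects the validity of the argument.
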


\begin{lem}\label{lem:noetale_irrep_SLn}
$G=\GL_1\times\SL_m\times\overset{k}{\cdots}\times\SL_m$ has no irreducible \'etale representations.
\end{lem}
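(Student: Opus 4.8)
The plan is to reduce to the classification of irreducible reduced prehomogeneous modules and use the arithmetic constraints from Theorem \ref{mthm:castling}. Suppose for contradiction that $(G,\rho,V)$ is an irreducible \'etale representation for $G=\GL_1\times\SL_m\times\overset{k}{\cdots}\times\SL_m$ with $k\geq 2$. By Remark \ref{rem:only_SL_irred}, $\rho$ must be castling-equivalent to one of SK I-4, SK I-8, or SK I-11, since these are the only irreducible \'etale modules in the list SK (Proposition \ref{prop_SKspecial}) and castling only introduces additional $\SL$-factors. In particular $(G,\rho,V)$ is castling-equivalent to a \emph{one-simple} irreducible module (each of SK I-4, I-8, I-11 has a single non-$\GL_1$, non-$\SL$ simple factor, or in the I-4 case only $\GL_2$ together with a trivial $\SL_1$). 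Hence the final clause of Theorem \ref{mthm:castling} applies: writing $(G,\rho,V)=(L\times\SL_{m_1}\times\cdots\times\SL_{m_r},\ \sigma\otimes\omega_1\otimes\cdots\otimes\omega_1,\ V^n\otimes\CC^{m_1}\otimes\cdots\otimes\CC^{m_r})$, we get $\gcd(m_i,m_j)=1$ for all $i<j$ \emph{and} $\gcd(n,m_i)=1$ for all $i$.

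First I would pin down what $L$ and $n$ are in each of the three base cases. For SK I-4 the one simple factor is $\GL_2$ acting by $3\omega_1$ on $\Sym^3\CC^2$, so $L=\GL_2$ and $n=\dim\Sym^3\CC^2=4$ (here there are no genuine $\SL_{m_i}$ factors in the reduced form, but castling to reach $G$ must create at least $k\geq 2$ of them). For SK I-8 we have $L=\GL_3$ (the $\SL_3$-factor together with the scalar), $\sigma=2\omega_1$, $n=\dim\Sym^2\CC^3=6$, with one reduced $\SL$-factor $\SL_2$. For SK I-11, $L$ corresponds to $\SL_5$ acting by $\omega_2$, so $n=\dim\bigwedge^2\CC^5=10$, with one reduced factor $\SL_4$. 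Now the key point: in all three base modules the simple factor $L$ is \emph{not} of type $\SL_{m}$ in its standard representation — it is $\GL_2$ via $3\omega_1$, $\SL_3$ via $2\omega_1$, or $\SL_5$ via $\omega_2$ — yet the group $G$ we want has \emph{every} non-central simple factor equal to $\SL_m$. So the factor $L$ in the canonical form from Theorem \ref{mthm:castling} must itself be one of these exotic factors, which means $G$ literally contains $\GL_2$ (with $3\omega_1$), $\SL_3$, or $\SL_5$ as a simple factor — contradicting that all $k\geq 2$ copies of the simple factor of $G$ are isomorphic to a \emph{single} fixed $\SL_m$ acting only through $\omega_1$ on tensor factors. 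Wait: $\SL_3$ and $\SL_5$ \emph{are} special linear groups, so I need to be more careful here.

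The genuinely needed argument is therefore the arithmetic one, and this is the step I expect to be the main obstacle. After castling SK I-8 or SK I-11 to the shape of $G$, the group is $\GL_1\times\SL_{m}^{\times k}$, so in the notation of Theorem \ref{mthm:castling} we would need $L\times\SL_{m_1}\times\cdots\times\SL_{m_r}$ with all the $m_i$ (and the simple factor of $L$, if it is an $\SL$) equal to the \emph{same} integer $m$. But Theorem \ref{mthm:castling}(1) forces $\gcd(m_i,m_j)=1$ for $i<j$; if $r\geq 2$ and $m_i=m_j=m$ then $m=1$, collapsing those $\SL$-factors to trivial groups and reducing the count of genuine simple factors. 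For SK I-8 the simple factor $L=\SL_3$ has $m_0=3$ while the reduced $\SL$-factor has $m_1=2$, and any further castling of $\SL_3\times\SL_m\times(\text{stuff})$ must keep a factor of size divisible by $3$ or by $2$ distinct from the others, so one can never arrive at all factors equal to a common $m$; the same coprimality obstruction rules out SK I-11 (sizes $5$ and $4$). For SK I-4 the reduced form has no $\SL$-factor at all but $n=4$, and promotion/castling introduces $\SL_{n-1}=\SL_3$ and then further factors that by Lemma \ref{lem:gcd} stay pairwise coprime, again incompatible with $k\geq 2$ equal copies. Assembling this: in each of the three cases the required equality of all simple factors of $G$ contradicts the pairwise-coprimality in Theorem \ref{mthm:castling}(1) (together with $\gcd(n,m_i)=1$ for all $i$, which eliminates the escape route of letting one $m_i$ share a factor with $n$). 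Hence no irreducible \'etale representation for $G$ exists, as claimed. The one place demanding care is verifying that no sequence of castling transforms starting from SK I-4, I-8, or I-11 can produce a group whose non-central simple factors are $k\geq 2$ mutually isomorphic copies of $\SL_m$; this follows by tracking, via Lemma \ref{lem:gcd}, the invariant that distinct $\SL$-factor dimensions remain pairwise coprime and coprime to $n$, so two equal dimensions $>1$ are impossible.
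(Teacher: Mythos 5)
Your proposal is, at its core, the same argument the paper gives: reduce (via castling-equivalence and Proposition \ref{prop_SKspecial}) to the three reduced \'etale modules SK I-4, I-8, I-11, and then use the coprimality constraint of Theorem \ref{mthm:castling}(1) to conclude that no castling transform can have two simple factors $\SL_{m_1}$, $\SL_{m_2}$ with $m_1=m_2\geq 2$, so the group can never be $\GL_1\times\SL_m^{\times k}$ with $k\geq 2$. That part is correct and is exactly the paper's proof.

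One side claim in your write-up is wrong, though it is not load-bearing: you assert that each of the three base modules is castling-equivalent to a \emph{one-simple} irreducible module, so that the ``Moreover'' clause of Theorem \ref{mthm:castling} gives $\gcd(n,m_i)=1$ for \emph{all} $i$. For SK I-8 and SK I-11 this fails: these are two-simple, with $(n,m)=(6,2)$ and $(10,4)$ respectively, so $\gcd(n,m)=2$, and by Lemma \ref{lem:gcd}(3) every castling transform retains one index $i_0$ with $\gcd(n,m_{i_0})=2$. Fortunately you never actually need this; the pairwise coprimality $\gcd(m_i,m_j)=1$ from part (1) alone already forces any two $\omega_1$-factors to have distinct sizes once they exceed $1$, and the reduced forms themselves visibly have two $\SL$-factors of unequal size. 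The exploratory middle paragraph (about $L$ not being of type $\SL$) should simply be deleted, as you yourself note it does not survive scrutiny.
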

\begin{proof}
If $G$ has an irreducible \'etale module $(G,\rho,V)$, then
it is castling-equivalent to one of the modules
SK I-4, SK I-8 or SK I-11 above. But in each of these cases,
we have two factors $\SL_{m_1}$, $\SL_{m_2}$ with $m_1\neq m_2$.
Any non-trivial castling transform of these modules would have at
least three simple factors. By Theorem \ref{mthm:castling},
this means there are at least two simple factors
$\SL_{m_1}$, $\SL_{m_2}$ with $\gcd(m_1,m_2)=1$, which again
means $m_1\neq m_2$.
\end{proof}

{
\renewcommand{\themthm}{\ref{mthm:noetale_SLn}}
\begin{mthm}
Let $G=\GL_1\times S\times\overset{k}{\cdots}\times S$,
where $S$ is a simple algebraic group and $k\geq 2$.
Then $G$ has no \'etale representations.
\end{mthm}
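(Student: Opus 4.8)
The plan is to combine the two structural results already established in the excerpt: Corollary \ref{cor:S_not_SLn}, which disposes of all simple $S$ other than $\SL_m$, and the machinery of Theorem \ref{mthm_nonreg} together with Theorem \ref{mthm:castling}. So the entire proof reduces to the case $S=\SL_m$ for some $m\geq 2$, and $G=\GL_1\times\SL_m\times\overset{k}{\cdots}\times\SL_m$ with $k\geq 2$. I would argue by contradiction: suppose $(G,\rho,V)$ is an \'etale module. By Lemma \ref{lem:noetale_irrep_SLn} it cannot be irreducible, so it decomposes into at least two irreducible components.

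Next I would invoke Baues's Theorem \ref{mthm_nonreg}: each irreducible component $(G,\rho_i,V_i)$ of the \'etale module $(G,\rho,V)$ is a non-regular prehomogeneous module for $G$. By the Sato--Kimura classification of non-regular irreducible reductive prehomogeneous modules (list III, as recalled in the excerpt just before Corollary \ref{cor:S_not_SLn}), each such component is castling-equivalent to one of SK III-1 through SK III-6. Since $S=\SL_m$, each component is in fact a prehomogeneous module for a group of the form $\GL_1^{a_i}\times\SL_m\times\cdots\times\SL_m$ (the torus part being at most one-dimensional when restricted appropriately, but in any case the semisimple part consists only of copies of $\SL_m$). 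Now I apply Theorem \ref{mthm:castling} to each irreducible component: writing the component as $(L\times\SL_{m_1}\times\cdots\times\SL_{m_{k_i}},\,\sigma\otimes\omega_1\otimes\cdots\otimes\omega_1,\,V^n\otimes\CC^{m_1}\otimes\cdots\otimes\CC^{m_{k_i}})$ with $L$ one-simple, the theorem forces $\gcd(m_s,m_t)=1$ for $s<t$. But every simple factor of $G$ is $\SL_m$, so all the $\SL_{m_j}$-factors appearing across all components, together with the $L$-factor, must be $\SL_m$. If a single component carries two or more $\SL_m$-factors among its $\SL_{m_j}$'s, then $\gcd(m,m)=m\geq 2$ contradicts part (1) of Theorem \ref{mthm:castling}. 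Hence each irreducible component of $(G,\rho,V)$ involves at most one of the $k$ copies of $\SL_m$ (plus possibly $L=\SL_m$, but $L$ is the ``distinguished'' one-simple factor coming from list III, and in cases III-2 through III-6 it is itself of the form $\SL_m$, $\GL_2$, $\Sp_n$, or $\Spin_{10}$ — only $\SL_m$ survives).

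At this point I would make the counting argument explicit. The group $G$ has exactly $k$ copies of $S=\SL_m$ in its semisimple part; call them $S^{(1)},\ldots,S^{(k)}$. For the direct-sum module $(G,\bigoplus_i\rho_i,\bigoplus_i V_i)$ to be faithful (up to a finite subgroup) on all of $G$, every factor $S^{(j)}$ must act nontrivially on at least one component $V_i$. By the previous paragraph, each component $V_i$ is acted on nontrivially by at most one of the $S^{(j)}$ (the factors in SK III-$r$ with their $\SL_{m}$-pieces being coprime except for the one distinguished factor). So we can partition: component $V_i$ ``belongs'' to the unique $S^{(j(i))}$ that acts on it nontrivially (with the convention that components on which all $S^{(j)}$ act trivially are impossible, since then that component would be a module for $\GL_1$ alone with no open orbit unless one-dimensional, which can be handled separately or absorbed). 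Grouping components by which $S^{(j)}$ they belong to partitions $V$ into $k$ blocks $W_1,\ldots,W_k$, each a $G$-submodule. Each $W_j$ is a proper submodule of $V$ (since $k\geq 2$ and each block is nonempty). By Baues's Theorem \ref{mthm_nonreg}, each $W_j$ is therefore a non-regular prehomogeneous module. But an \'etale module $V=W_1\oplus\cdots\oplus W_k$ in which the semisimple part $S^{(j)}$ acts only on $W_j$ would, by Proposition \ref{cor_iso} applied iteratively, force $\dim W_j$ to match the dimension of $\GL_1\times S^{(j)}$ cut down by the other blocks — and here the single shared one-dimensional center is the obstruction, exactly as in Baues's argument: only one $\GL_1$ is available to make the first block prehomogeneous, and after passing to the generic isotropy subgroup (which must then have a one-dimensional-center-like deficiency for the next block), there is no torus left. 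I expect the cleanest way to finish is: apply Theorem \ref{mthm_nonreg} to conclude each $W_j$ is non-regular, then apply Lemma \ref{lemma_regspecial}-type reasoning via Proposition \ref{cor_iso} — the generic isotropy subgroup of $(G,\rho|_{W_1},W_1)$ is, on the $S^{(2)},\ldots,S^{(k)}$ factors, all of $S^{(j)}$, but has trivial or finite torus part, so it is (up to finite index) a semisimple group $S\times\cdots\times S$ acting on $W_2\oplus\cdots\oplus W_k$, and by Corollary \ref{cor_noetale_ss} a semisimple group admits no \'etale representation — contradiction.

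The main obstacle, and the step requiring the most care, is the bookkeeping in the middle paragraph: verifying that in \emph{every} SK III-$r$ case the ``distinguished'' one-simple factor $L$ is compatible with $S=\SL_m$ only when $L=\SL_m$, and that the coprimality constraint from Theorem \ref{mthm:castling}(1) genuinely prevents two copies of $\SL_m$ from landing in the same irreducible component — one must check that after castling, the relevant $m_i$'s really are forced to equal $m$ and not merely to be coprime to each other in some way that could still accommodate repeated $\SL_m$'s (it cannot, since $\gcd(m,m)=m\geq 2$, but one must confirm the factor labels are not secretly $\SL_1=\{1\}$, which would be trivial and not count as a copy of $S$). Once that combinatorial reduction to ``each component sees at most one copy of $S$'' is secured, the descent to a semisimple isotropy group and the appeal to Corollary \ref{cor_noetale_ss} is routine. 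I would also need to handle the degenerate possibility that $m=1$ somewhere (i.e.\ $\SL_1$ trivial), but since $S$ is assumed simple, $m\geq 2$ throughout, so this does not arise for the copies of $S$ themselves.
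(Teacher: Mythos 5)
Your overall strategy is the same as the paper's --- reduce to $S=\SL_m$ via Corollary \ref{cor:S_not_SLn}, rule out the irreducible case by Lemma \ref{lem:noetale_irrep_SLn}, and play Baues's non-regularity theorem (Theorem \ref{mthm_nonreg}) against the coprimality constraints of Theorem \ref{mthm:castling} --- but there are two concrete gaps in the execution. The first, and more serious, is your claim that each irreducible component is acted on non-trivially by at most one of the $k$ copies of $\SL_m$. Part (1) of Theorem \ref{mthm:castling} gives $\gcd(m_i,m_j)=1$ only among the factors $\SL_{m_1},\ldots,\SL_{m_k}$; it says nothing about the simple factor sitting inside $L$. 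So a component whose group is $\GL_1\times\SL_m\times\SL_m$, with one copy of $\SL_m$ being the simple factor of $L$ and the other being some $\SL_{m_j}$, is not excluded by part (1), and part (2) explicitly permits one exceptional index with $\gcd(n,m_{i_0})>1$ (note also that $n$ there is the dimension of $L$'s module, not the size of $L$'s simple factor). You acknowledge the possibility $L=\SL_m$ but do not close it. The paper closes it by going the other way: it first produces a \emph{single} component on which two simple factors act non-trivially, observes that (after discarding factors in the generic stabilizer) its reduced form is one of SK III-1 (with $(L,\rho)\neq(\SL_n,\omega_1)$), III-2, III-3, III-4, whose group is $\GL_1\times\SL_a\times\SL_b$ with $a\neq b$, and then uses the full strength of Lemma \ref{lem:gcd} --- in particular the invariance $\gcd(n,m_{i_0})=\gcd(n,m)$ of part (3), not just the pairwise coprimality of part (1) --- to see that every castling transform retains two factors $\SL_{m_1}$, $\SL_{m_2}$ with $m_1,m_2>1$ and $\gcd(m_1,m_2)=1$, hence $m_1\neq m_2$.

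The second gap is in your endgame. Having partitioned the components into blocks $W_1,\ldots,W_k$, you assert that the generic isotropy subgroup of $(G,\rho|_{W_1},W_1)$ has ``trivial or finite torus part,'' so that the residual group acting on $W_2\oplus\cdots\oplus W_k$ is semisimple and Corollary \ref{cor_noetale_ss} applies. This does not follow from what you have established: $W_1$ is only known to be a non-regular prehomogeneous module for $\GL_1\times S^{(1)}$, and the generic isotropy group of such a module can well contain a positive-dimensional torus (already $(\GL_1\times\SL_n,\mu\otimes\omega_1,\CC^n)$ does). The paper avoids this entirely by running the block argument in the contrapositive as its \emph{first} step: if no component sees two simple factors, then $V$ splits as a direct sum of \'etale modules for the groups $\GL_1\times S_i$; each summand then has finite generic isotropy, hence is regular by Lemma \ref{lemma_regspecial} (and regular for $G$, since the remaining $S_j$ stabilize it pointwise), so $V$ has a proper regular submodule, contradicting Theorem \ref{mthm_nonreg} directly. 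Your torus-counting can probably be repaired by a careful dimension count with the single shared $\GL_1$, but as written the step is asserted rather than proved.
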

\addtocounter{mthm}{-1}
}
\begin{proof}
Consider $G=\GL_1\times S_1\times\cdots\times S_k$.
As we are interested in the case where all simple factors are
identical,  by Corollary \ref{cor:S_not_SLn}, we only need to
consider the case where all $S_i=\SL_{m_i}$ for $m_i\geq 2$.

Let $(G,\rho,V)$ be an \'etale module.
First, observe that the \'etale representation has at least
one irreducible factor on which at least two of
the factors, say $S_1$ and $S_2$, act non-trivially.
In fact, otherwise $(G,\rho,V)$ would be a direct sum of \'etale
modules $(\GL_1\times S_i,\rho_i,V_i)$, which is regular
for $\GL_1\times S_i$ by Lemma \ref{lemma_regspecial}, hence for
$G$, as the stabilizer on $V_i$ is the product of the $S_j$ with
$j\neq i$.
But the center of $G$ is
one-dimensional, so by Theorem \ref{mthm_nonreg},
an $(G,\rho,V)$ does not have proper regular submodules.
This would imply $k=1$, contradicting our assumption that
$k\geq 2$.

Let $(G,\rho_1,V_1)$ be an irreducible factor on which at
least two simple factors of $G$ act non-trivially.
By Lemma \ref{lem:noetale_irrep_SLn}, there are no irreducible
\'etale representations of $G$ if all simple factors are identical,
so we may assume the \'etale representation is reducible, and
by Theorem \ref{mthm_nonreg}, $(G,\rho_1,V_1)$ must be a
non-regular irreducible prehomogeneous module for $G$.
After removing simple factors contained in the generic stabilizer
of $(\rho_1,V_1)$ from $G$, we can assume that $(G,\rho_1,V_1)$
is castling equivalent to one of the reduced irreducible modules
SK III-1 (with $(L,\rho)\neq(\SL_n,\omega_1)$), SK III-2,
SK III-3, and SK III-4.
In each of these cases the group is of the form
$\GL_1\times\SL_m\times\SL_n$ with $m\neq n$.
By Theorem \ref{mthm:castling}, any of its castling transforms
has at least two factors $\SL_{m_1}$ and $\SL_{m_2}$ with
$m_1,m_2>1$ and $\gcd(m_1,m_2)=1$. In particular, it is not
possible that all simple factors of the group $G$ are identical.
\end{proof}

\begin{remark}
If we admit a center $\GL_1^k$, then we trivially obtain
\'etale modules with semisimple part $\SL_n\times\cdots\times\SL_n$ by taking direct sums of \'etale modules for $\GL_1\times\SL_n$.
\end{remark}

%
%
%
%
%
%
%
%
%
%




\end{document}